\numberwithin{equation}{section}
\definecolor{darkblue}{rgb}{0,0,0.5}
\newdimen\margin
\def\textno#1&#2\par{
   \margin=\hsize
   \advance\margin by -4\parindent
          \setbox1=\hbox{\sl#1}
   \ifdim\wd1 < \margin
      $$\box1\eqno#2$$
   \else
      \bigbreak
      \hbox to \hsize{\indent$\vcenter{\advance\hsize by -3\parindent
      \it\noindent#1}\hfil#2$}
      \bigbreak
   \fi}
\newtheorem{theorem}[algorithm]{Theorem}
\newtheorem{prop}[algorithm]{Proposition}
\newtheorem{lemma}[algorithm]{Lemma}
\newtheorem{cor}[algorithm]{Corollary}
\newtheorem{fact}[algorithm]{Fact}
\theoremstyle{definition}
\def\lateproof#1{\removelastskip\penalty55\medskip\noindent\begin{stepenv}\end{stepenv}{\bf Proof of #1. }} 
\def\noproof{{\unskip\nobreak\hfill\penalty50\hskip2em\hbox{}\nobreak\hfill%
       $\square$\parfillskip=0pt\finalhyphendemerits=0\par}\goodbreak}
\def\endproof{\noproof\bigskip}
\newcounter{stepenv}
\newenvironment{stepenv}[1][]{\refstepcounter{stepenv}}{}
\newcounter{step}[stepenv]
\newcounter{substep}[step]
\renewcommand{\thesubstep}{\thestep.\arabic{substep}}
\newcounter{claim}[stepenv]
\newenvironment{claim}[1][]{\refstepcounter{claim}\par\medskip\noindent%
        \textit{Claim~\theclaim. #1} \itshape\rmfamily}{\medskip}
\newcommand{\cB}{\mathcal{B}}
\newcommand{\cC}{\mathcal{C}}
\newcommand{\cH}{\mathcal{H}}
\newcommand{\cL}{\mathcal{L}}
\newcommand{\cM}{\mathcal{M}}
\newcommand{\cP}{\mathcal{P}}
\newcommand{\cR}{\mathcal{R}}
\newcommand{\cS}{\mathcal{S}}
\newcommand{\cW}{\mathcal{W}}
\newcommand{\cY}{\mathcal{Y}}
\newcommand{\cZ}{\mathcal{Z}}
\newcommand{\bN}{\mathbb{N}}
\newcommand{\EE}{\mathbb{E}}
\newcommand{\PP}{\mathbb{P}}
\def\eps{{\varepsilon}}
\newcommand{\wx}{\mathbf{x}}
\newcommand{\ordered}[1]{\overrightarrow{#1}}
\newcommand{\defn}{\emph}
\newcommand{\prob}[1]{\mathrm{\mathbb{P}}\left[#1\right]}
\newcommand{\Set}[1]{\{#1\}}
\def\In{\subseteq}
\newcommand{\IND}{\mathbbm{1}}
\def\Pr{\mathbf{Pr}}
\def\COMMENT#1{}
\def\TASK#1{}
\let\TASK=\footnote             
\let\COMMENT=\footnote          
\begin{document}

\title{Tight Hamilton cycles with high discrepancy}

\author{
Lior Gishboliner \thanks{Department of Mathematics, University of Toronto, ON, Canada.
\\ \emph{Email}: \texttt{lior.gishboliner@utoronto.ca}. When conducting this work, LG was supported by SNSF grant 200021\_196965.} 
\and
Stefan Glock \thanks{Fakultät für Informatik und Mathematik, Universität Passau, Germany.
\\ \emph{Email}: \texttt{stefan.glock@uni-passau.de,amedeo.sgueglia@uni-passau.de}.
This research was conducted while AS was affiliated with University College London and supported by the Royal Society.}
\and
Amedeo Sgueglia \footnotemark[2]
}

\date{}

\maketitle

\begin{abstract} 
In this paper, we study discrepancy questions for spanning subgraphs of $k$-uniform hypergraphs. Our main result is that, for any integers $k \ge 3$ and $r \ge 2$, any $r$-colouring of the edges of a $k$-uniform $n$-vertex hypergraph $G$ with minimum $(k-1)$-degree $\delta(G) \ge (1/2+o(1))n$ contains a tight Hamilton cycle with high discrepancy, that is, with at least $n/r+\Omega(n)$ edges of one colour. The minimum degree condition is asymptotically best possible and our theorem also implies a corresponding result for perfect matchings.
Our tools combine various structural techniques such as Tur\'an-type problems and hypergraph shadows with probabilistic techniques such as random walks and the nibble method. We also propose several intriguing problems for future research.
\end{abstract}

\section{Introduction}

In discrepancy theory, the basic question is whether a structure can be partitioned in a balanced way, or if there is always some ``discrepancy'' no matter how the partition is made. Formally, 
let $\mathcal{H}$ be a hypergraph and let 
$f : V(\mathcal{H}) \rightarrow 
\{\text{red, blue}\}$ be a 2-colouring of its vertices. 
For an edge $e \in E(\mathcal{H})$ and a colour $c$, let $c(e) := \{x \in e : f(x) = c\}$. 
The discrepancy of $e$ is defined as 
$D_f(e) := \big||\text{red}(e)| - |\text{blue}(e)|\big| = 2 \cdot \max_{c \in \{\text{red, blue}\}} \left(
|c(e)| - \frac{|e|}{2} \right)$; the larger $D_f(e)$ is, the less balanced is the colouring of $e$. 
The discrepancy of $\mathcal{H}$ is then defined as $\min_f \max_e D_f(e)$. In other words, the discrepancy measures the maximum imbalance that is guaranteed to occur in every $2$-colouring of $V(\mathcal{H})$. 
Discrepancy of hypergraphs is a classical topic in combinatorics; we refer the reader to~\cite[Chapter 13]{AS:08} for an introduction.
The notion of discrepancy naturally generalizes to more than $2$ colours: 
For a hypergraph $\mathcal{H}$, an $r$-colouring $f : V(\mathcal{H}) \rightarrow [r]$ and an edge $e \in E(\mathcal{H})$, define the discrepancy of $e$ as 
$D_f(e) := r \cdot \nolinebreak \max_{c \in [r]} \left(
|c(e)| - \frac{|e|}{r} \right)$. 
This coincides with the above definition of $D_f(e)$ for the case $r=2$. The $r$-colour discrepancy of $\mathcal{H}$ is then defined as $\min_f \max_e D_f(e)$. 

There are many works studying discrepancy problems for hypergraphs arising from graphs, namely, when $V(\mathcal{H})$ is the {\em edge set} of a graph $G$ and $E(\mathcal{H})$ is a family of subgraphs of $G$. Two early results of this type are the theorem of 
Erd\H{o}s and Spencer~\cite{ES:72} on the discrepancy of cliques in the complete graph, and the work of Erd\H{o}s, F\"uredi, Loebl and S\'os~\cite{EFLS:95} on the discrepancy of copies of a given spanning tree in the complete graph. In recent years there has been a lot of interest in discrepancy problems in general graphs, and there are by now many works studying conditions that guarantee the existence of high-discrepancy subgraphs of various types, such as perfect matchings and Hamilton cycles~\cite{BCJP:20, FHLT:21,GKM_Hamilton, GKM_trees}, spanning trees~\cite{GKM_trees}, $H$-factors~\cite{BCPT:21,BCG:23} and powers of Hamilton cycles~\cite{Bradac:22}. See also~\cite{FL:22,GKM_oriented} for an oriented analogue. 
Many of the results study minimum degree thresholds for linear discrepancy, namely, they determine how large the minimum degree of $G$ should (asymptotically) be so that $G$ is guaranteed to contain a subgraph of a certain type with discrepancy $\Omega(n)$.
For example, in~\cite{BCJP:20, FHLT:21, GKM_trees}, it is shown that for every $\varepsilon > 0$, every $r$-edge-colouring of an $n$-vertex graph with minimum degree $(\frac{r+1}{2r} + \varepsilon)n$ has a Hamilton cycle and a perfect matching with linear discrepancy, and the constant $\frac{r+1}{2r}$ is best possible.
We will often say {\em high discrepancy} to mean linear discrepancy, i.e., discrepancy $\Omega(n)$.

In this work, we study discrepancy problems in $k$-uniform hypergraphs (short: \defn{$k$-graphs}), 
in analogy to the aforementioned works for graphs. In this context, it is worth mentioning the seminal result of Alon, Frankl and Lov\'asz~\cite{AFL:86} in which they proved, using topological methods, the following conjecture of Erd\H{o}s: If the edges of the complete $n$-vertex $k$-graph $K^{(k)}_n$ are coloured with $r$ colours, and $n\ge (r-1)(s-1)+sk$, then there exists a monochromatic matching of size~$s$. This generalizes Kneser's conjecture which corresponds to the case $k=2$ and was resolved by Lov\'asz~\cite{Lovasz:78}. The Alon--Frankl--Lov\'asz result implies in particular that any $2$-edge-colouring of $K^{(k)}_n$ contains a monochromatic matching of size at least $\lfloor \frac{n}{k+1} \rfloor$ and, by arbitrarily adding edges, this can be extended to a perfect matching with high discrepancy (assuming $k\mid n$ of course).

Our main result is the determination of the minimum $(k-1)$-degree threshold for the discrepancy of perfect matchings and tight Hamilton cycles in $k$-graphs, thereby establishing a discrepancy version of the celebrated theorem of R\"odl, Ruci\'nski and Szemer\'edi~\cite{RRS:08}. 
Recall that a \defn{tight Hamilton cycle} of a $k$-graph $G$ is a cyclic ordering $v_1,\dots,v_n$ of the vertices of $G$ such that $v_iv_{i+1}\dots v_{i+k-1}$ is an edge for every $1 \leq i \leq n$, with indices taken modulo $n$. Before stating our main result, let us give some background on perfect matchings and Hamilton cycles in hypergraphs of large minimum degree. 
For a $k$-graph $G$ and a set $S \subseteq V(G)$, we say that the \emph{degree} of $S$ in $G$, denoted by $d_G(S)$, is the number of edges containing~$S$.
We use $\delta(G)$ to denote the \emph{minimum $(k-1)$-degree}, which is the minimum of $d_G(S)$ over all $(k-1)$-sets $S \subseteq V(G)$. 
In their seminal paper which introduced the absorbing method systematically, R\"odl, Ruci\'nski and Szemer\'edi~\cite{RRS:08} showed that for every $\varepsilon > 0$, any $n$-vertex $k$-graph $G$ with $\delta(G)\ge (1/2+\varepsilon)n$ contains a tight Hamilton cycle. 
Moreover, this is best possible, as there are $k$-graphs $G$ with $\delta(G) = n/2 - O(1)$ and no tight Hamilton cycle (see~\cite[Theorem $3$]{KK:99}).

Let us now consider discrepancy of tight Hamilton cycles. 
Mansilla Brito~\cite{mansilla} showed that if a $3$-graph $G$ satisfies $\delta(G) \ge (5/6+\eps)n$, then it contains a tight Hamilton cycle with high discrepancy.
We improve this to $\delta(G) \geq (1/2+\eps)n$ and show that this holds for any uniformity $k \geq 3$ and any (fixed) number of colours $r \geq 2$. This result is best possible since $\delta(G) \geq \frac{n}{2} - O(1)$ is needed even to guarantee the existence of a tight Hamilton cycle.
Thus, for $k \geq 3$, the threshold for the discrepancy of Hamilton cycles is the same as the existence threshold, and does not depend on the number of colours $r$. This is in contrast to the graph case, where the discrepancy threshold is strictly larger than the existence threshold and decreases as $r$ increases (see~\cite{BCJP:20,GKM_trees,FHLT:21}).

\begin{theorem}
\label{thm:main}
For all $k,r\in \bN$ with $k\ge 3$ and $r\ge 2$, and all $\eps>0$, there exists $\mu>0$ such that the following holds for all sufficiently large $n$.
Let $G$ be an $n$-vertex $k$-graph with $\delta(G)\ge (1/2+\eps)n$ whose edges are $r$-coloured. Then there exists a tight Hamilton cycle in $G$ which contains at least $(1+\mu)\frac{n}{r}$ edges of the same colour.
\end{theorem}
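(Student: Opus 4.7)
The plan is to run the absorbing method of R\"odl, Ruci\'nski and Szemer\'edi while arranging the construction so that some colour is over-represented in the resulting tight Hamilton cycle. First I would construct an absorbing tight path $P_{\mathrm{abs}}$ of length $o(n)$ with the standard property that any sublinear leftover set of vertices can be absorbed into it; this part only uses the minimum $(k-1)$-degree hypothesis and is essentially the RRS absorber. The task then reduces to producing a \emph{biased almost-spanning tight path} $P^*$ on $(1-o(1))n$ of the remaining vertices, meaning that for some colour $c^* \in [r]$ at least $(1/r+2\mu)|P^*|$ of its edges have colour $c^*$. Since the absorber and the short connectors used to close $P^*$ into a cycle contribute only $o(n)$ edges, such a $P^*$ yields a tight Hamilton cycle with at least $(1+\mu)n/r$ edges of colour $c^*$.

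The construction of $P^*$ proceeds by a dichotomy on the colour distribution in $G$. In the \emph{globally biased} case, some colour $c^*$ satisfies $|E_{c^*}(G)| \ge (1/r+\mu_0)|E(G)|$. Here I would apply the nibble method to obtain a near-perfect decomposition of $E(G)$ into tight paths of moderate length, coupled with a random-walk analysis on the transition digraph of ordered $(k-1)$-tuples (which mixes rapidly because $\delta(G) \ge (1/2+\eps)n$); this yields many vertex-disjoint tight paths inheriting the global colour-$c^*$ density, which are then concatenated via short connectors into $P^*$. In the \emph{globally balanced} case, all colours have roughly $|E(G)|/r$ edges, and a Tur\'an-type argument on the $(k-1)$-shadow of each colour class identifies a colour $c^*$ that is locally biased at a positive fraction of $(k-1)$-tuples (or, in the extreme subcase of locally balanced colouring, shows that every $(k-1)$-tuple has $\Theta(n)$ extensions in each colour, so that greedy construction produces a long tight path using almost only colour $c^*$). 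A nibble tiling of the resulting biased gadgets, followed by concatenation along short connectors, delivers $P^*$.

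The main obstacle I would expect is the intermediate regime of the globally balanced case, where neither a large local bias nor a clean perfectly balanced structure is present. There, Tur\'an-type supersaturation for tight paths must be combined carefully with the nibble method to extract sufficiently many biased short structures from the weak local asymmetry and then tile them into a single long tight path, while the connectors must be chosen so as not to destroy the accumulated discrepancy. An additional subtlety is that for $r \ge 3$ one cannot reduce to two colours without loss, so the biased colour $c^*$ has to be singled out directly from the full $r$-colour distribution of $G$ and its link structure at the outset; the Tur\'an-type and shadow inputs must be stated so as to produce a distinguished colour rather than merely a 2-colour partition.
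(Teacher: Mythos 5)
Your overall architecture (build an almost-spanning biased tight path, then complete with a short structure contributing only $o(n)$ edges) is in the same spirit as the paper, and your intuition that a random walk on ordered $(k-1)$-tuples combined with a nibble should produce the biased path collection is on the right track. But there is a genuine structural gap, and one of your subsidiary claims is incorrect.

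The gap is your dichotomy. You split on ``globally biased'' versus ``globally balanced'', but neither branch closes. In the globally biased branch, a constant excess of colour-$c^*$ edges over $(1/r)|E(G)|$ does \emph{not} propagate into the random walk. The walk must be driven by a ($\mu$-normal) perfect fractional matching so that every vertex is visited uniformly, and the walk sees edge $e$ with probability $\propto \wx(e)$, not $\propto 1$; a normal matching only constrains weights up to a multiplicative factor, so a density surplus of $c^*$ among unweighted edges says nothing about $\sum_{e\text{ red}}\wx(e)$. In the globally balanced branch, ``local bias at a positive fraction of $(k-1)$-tuples'' is not sufficient either: if every $(k-1)$-tuple has $\Theta(n)$ extensions in each colour (which is consistent with a completely balanced colouring), your greedy monochromatic path stalls at length $\Theta(n) < (1-o(1))n$, and you have no control over the colours of the completion, so the adversary can wipe out the accumulated bias. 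What is actually needed is a structural object that allows a \emph{local swap}: the paper's key lemma shows that either $G$ contains a \emph{near-alternating $k$-grid} (a $k$-grid whose horizontal edges are red except at most one, and whose vertical edges are blue except at most one), or one colour class has $o(n^k)$ edges. Many edge-disjoint such gadgets allow one to perturb a balanced normal fractional matching into a biased normal one, which then drives the random walk; and the EGJ nibble (with a second weight function recording red edges per path) transfers the fractional bias into an actual almost-spanning collection of paths. This gadget-and-perturbation mechanism has no counterpart in your proposal, and without it the balanced case (the hard case) is not handled.

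Two smaller points. First, your claim that ``for $r\ge 3$ one cannot reduce to two colours without loss'' is false as stated: the paper reduces exactly this way, by merging colours $c_2,\dots,c_r$ into ``blue'' and then running a biased case split -- if red (= $c_1$) has fractional weight $\ge n/(rk)$ one pushes red, otherwise blue has weight $\ge (r-1)n/(rk)$ and one pushes blue and averages at the end. Second, the paper does not use the RRS absorbing path; instead it sets aside a random reservoir $R$, builds the biased path forest on $V\setminus R$, links the paths by Lemma~\ref{lem:path connect} using $R$, and closes into a Hamilton cycle by the Hamilton-connectedness lemma (Lemma~\ref{lem:Hamilton_connected}). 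An absorber could plausibly be made to work as well, but this is a different and somewhat simpler completion.
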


Note that if $n$ is divisible by $k$, then a tight Hamilton cycle decomposes into $k$ perfect matchings. So by using Theorem~\ref{thm:main} and averaging, we obtain the following corollary, which was proved independently by Balogh, Treglown and Z\'arate-Guer\'en~\cite{BTZ-G:24}.

\begin{cor}\label{cor:PM}
For all $k,r\in \bN$ with $k\ge 3$ and $r\ge 2$, and all $\eps>0$, there exists $\mu>0$ such that the following holds for all sufficiently large $n$ divisible by $k$.
Let $G$ be an $n$-vertex $k$-graph with $\delta(G)\ge (1/2+\eps)n$ whose edges are $r$-coloured. Then there exists a perfect matching in $G$ which contains at least $(1+\mu)\frac{n}{rk}$ edges of the same colour.
\end{cor}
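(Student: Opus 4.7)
The plan is to apply Theorem~\ref{thm:main} directly and then extract a perfect matching from the resulting tight Hamilton cycle by an averaging argument. First, I would invoke Theorem~\ref{thm:main} with the same parameters $k$, $r$, $\eps$, obtaining a constant $\mu>0$ (which I will also use as the constant in the corollary) and a tight Hamilton cycle $C$ in $G$ containing at least $(1+\mu)\frac{n}{r}$ edges of some colour, say colour~$1$.

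The key structural observation is that when $k \mid n$, a tight Hamilton cycle $v_1,v_2,\dots,v_n$ decomposes into $k$ edge-disjoint perfect matchings. Namely, writing $e_i = \{v_i,v_{i+1},\dots,v_{i+k-1}\}$ (indices mod $n$) for the edges of $C$, for each $j \in \{0,1,\dots,k-1\}$ the set
\[
M_j \;=\; \bigl\{e_i \,:\, i \equiv j \pmod k\bigr\}
\]
consists of $n/k$ pairwise vertex-disjoint edges (since two edges $e_i, e_{i+k}$ occupy consecutive blocks of $k$ vertices, which are disjoint), hence is a perfect matching of $G$. Moreover, $E(C)$ is the disjoint union of $M_0,\dots,M_{k-1}$.

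Finally, since the $k$ matchings $M_0,\dots,M_{k-1}$ together contain at least $(1+\mu)\frac{n}{r}$ colour-$1$ edges, an averaging (pigeonhole) over $j \in \{0,\dots,k-1\}$ yields some $j^\ast$ for which $M_{j^\ast}$ has at least $\frac{1}{k}(1+\mu)\frac{n}{r} = (1+\mu)\frac{n}{rk}$ edges of colour~$1$, giving the desired perfect matching. There is essentially no obstacle here beyond verifying the decomposition; all the difficulty is absorbed into Theorem~\ref{thm:main}, and the constant $\mu$ can simply be inherited unchanged from that theorem.
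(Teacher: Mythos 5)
Your proposal is correct and is exactly the argument the paper uses: invoke Theorem~\ref{thm:main}, decompose the tight Hamilton cycle into $k$ perfect matchings via the residue classes of edge indices modulo $k$ (valid since $k \mid n$), and average to find one matching with at least $(1+\mu)\frac{n}{rk}$ edges of the majority colour. No gaps; the constant $\mu$ can indeed be inherited unchanged.
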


The constant $1/2$ in Corollary~\ref{cor:PM} is tight, as there exist $n$-vertex $k$-graphs with $\delta(G)=n/2-O(1)$ and no perfect matching (see~\cite{KO:06matchings,RRS:09}). 

Similarly, Theorem~\ref{thm:main} also implies an upper bound of $1/2$ for the discrepancy threshold of Hamilton $\ell$-cycles, where, for $1 \le \ell \le k-1$ and for $n$ divisible by $k-\ell$, a {\em Hamilton $\ell$-cycle} on $n$ vertices
is a cyclic order $v_1,\dots,v_n$ of the vertices such that 
$v_iv_{i+1}\dots v_{i+k-1}$ is an edge for every $i$ divisible by $k-\ell$ (so any two such consecutive edges intersect in exactly $\ell$ vertices).
Note that the case $\ell=k-1$ corresponds to a tight Hamilton cycle. 
Observe indeed that if $n$ is divisible by $k-\ell$, then a tight Hamilton cycle on $n$ vertices decomposes into $k-\ell$ Hamilton $\ell$-cycles. Thus, by using Theorem~\ref{thm:main} and averaging, we get that in every $r$-edge-colouring of a $k$-graph $G$ on $n$ vertices, with $n$ divisible by $k-\ell$ and $\delta(G) \geq (1/2+\varepsilon)n$, there is a Hamilton $\ell$-cycle with at least $(1+\mu)\frac{n}{r(k-\ell)}$ edges of the same colour. Unlike Theorem~\ref{thm:main} and Corollary~\ref{cor:PM}, here we do not know whether the constant $\frac{1}{2}$ is tight, i.e., whether minimum degree $\frac{n}{2}$ is necessary. See the concluding remarks for more on this.

\bigskip
\textbf{Organization of the paper.} 
In Section~\ref{sec:sketch}, we provide an overview of the key steps of our proof. In Section~\ref{sec:preliminaries}, we summerise some known tools. Section~\ref{sec:key lemma} contains our key structural lemma which is already sufficient to prove the case of perfect matchings, i.e., Corollary~\ref{cor:PM}. In Sections~\ref{sec:fractional}--\ref{sec:main proof}, we use additional methods to deal with Hamilton cycles.
In the final section, we collect various other problems concerning discrepancy of spanning structures in hypergraphs which seem very interesting for further research.

\bigskip
\textbf{Notation.} 
For a set $V$ and a natural number $m$, we write $\binom{V}{m}$ to denote the set of all $m$-subsets of $V$. 
We write $(V)_m$ to denote the set of all ordered $m$-tuples of distinct elements of $V$.
We use capital letters with arrows above to denote ordered tuples $\ordered{S} \in (V)_m$.
We shall subsequently drop the arrow to denote the unordered version of this $m$-tuple, so that if $\ordered{S}:=(v_1,v_2,\dots,v_m)$, then $S$ denotes the set $\{v_1,\dots,v_m\}$.
Moreover, we write $\overleftarrow{S}$ to denote the ordered $m$-tuple obtained by reversing the ordering of $\ordered{S}$, so that $\overleftarrow{S}:=(v_m,v_{m-1},\dots,v_1)$.

Let $G$ be a $k$-graph.
For $v  \in V(G)$ and a $(k-1)$-set $S \subseteq V(G)$, we say that $v$ is a \emph{neighbour} of $S$ in $G$ if $S \cup \{ v\}$ is an edge in $G$, and we denote the set of neighbours of $S$ in $G$ by $N_G(S)$.
The \emph{shadow} of $G$ is the $(k-1)$-graph on $V(G)$ whose edges are the $(k-1)$-sets which are contained in at least one edge of $G$.

Given a tight path $P = v_1 v_2 \dots v_\ell$ on $\ell\ge k-1$ vertices in a $k$-graph, we say that $P$ \defn{connects} the ordered $(k-1)$-sets $(v_1, v_2, \dots, v_{k-1})$ and $(v_\ell, v_{\ell-1}, \dots, v_{\ell-k+2})$, which we call the \emph{ends} of~$P$.
The choice of taking $(v_\ell, v_{\ell-1}, \dots, v_{\ell-k+2})$ rather than $(v_{\ell-k+2}, v_{\ell-k+3}, \dots, v_{\ell})$ as an end of $P$ is intentional and due to the fact that $P$ is an undirected path. We call $\ell$ the \emph{order} of $P$.

Given a $2$-edge-colouring of $G$ where we allow edges to receive multiple colours, we call the edges receiving both colours \emph{double-coloured}.

For $a$, $b$, $c \in (0, 1]$, we write
$a \ll b \ll c$ in our statements to mean that there are increasing functions $f, g : (0, 1] \to (0, 1]$ such that whenever $a \le f (b)$ and $b \le g(c)$, then the subsequent result holds.
Moreover, when using the Landau symbols $O(\cdot), \Omega(\cdot )$, subscripts denote variables that the implicit constant may depend on. 

We say that an event holds \defn{with high probability (w.h.p.)} if the probability that it holds tends to $1$ as the number of vertices $n$ tends to infinity.

\section{Proof overview}\label{sec:sketch}

Throughout this section, we let $G$ be an $r$-edge-coloured $n$-vertex $k$-graph with $\delta(G) \ge (1/2+ \nolinebreak \eps)n$.
We will first sketch a proof that $G$ contains a perfect matching with high discrepancy. Subsequently we will discuss what more needs to be done in order to find a tight Hamilton cycle with high discrepancy.

\medskip
{\bf Perfect matchings.} We start by assuming that $r=2$; we will handle the case of an arbitrary number of colours later on.

We aim to find an (edge-coloured) ``gadget'' in $G$ with the property that it contains a perfect matching where the majority colour is red and a perfect matching where the majority colour is blue.
Such a gadget can then be used to ``push'' the majority colour.
A natural candidate is the alternating $k$-grid, which is defined as follows.
A \emph{$k$-grid} is the $k$-graph on vertices $\{x_{ij}: 1 \le i,j \le k\}$ and with edges $x_{i1} \cdots x_{ik}$ for each $i \in [k]$ (which we will call \emph{horizontal} edges) and $x_{1j} \cdots x_{kj}$ for each $j \in [k]$ (which we will call \emph{vertical} edges).
An \emph{alternating $k$-grid} is a $2$-edge-coloured $k$-grid with all horizontal edges red and all vertical edges blue (cf.~Figure~\ref{fig_gadget}).
Observe that the horizontal edges form a red perfect matching and the vertical edges form a blue perfect matching.
If we can find linearly many such vertex-disjoint gadgets, say $\eps n/2k^2$ many, then, after removing them, the resulting $k$-graph $G'$ still has $\delta(G')\ge (1/2+\eps/2)|V(G')|$ and hence has a perfect matching~$M'$ (cf.~\cite{KO:06matchings,RRS:09}). Without loss of generality, assume that at least half of the edges of $M'$ are red. Then, for each of the gadgets, take the red perfect matching.
The union of all such matchings gives a perfect matching of $G$ with at least $(1-\frac{\eps}{2})\frac{n}{2k}+\frac{\eps}{2k}n=(1+\frac{\eps}{2})\frac{n}{2k}$ red edges, thus providing a perfect matching with high discrepancy.

\begin{figure}[htpb]
	\begin{center}
		\includegraphics{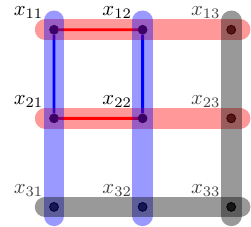}
		\captionof{figure}{An alternating $2$-grid on vertices $\{x_{11},x_{12},x_{21},x_{22}\}$ and a near-alternating $3$-grid on vertices $\{x_{11},x_{12},x_{13},x_{21},x_{22},x_{23},x_{31},x_{32},x_{33}\}$.
			The grey edges stand for edges whose colour is arbitrary.
		}
		\label{fig_gadget}
	\end{center}
\end{figure}

Unfortunately, an alternating $k$-grid does not necessarily exist in $G$, not even if $G$ is complete and has many edges of both colours. (For instance, if we choose a subset $A\In V(G)$ and colour all edges which intersect $A$ blue and all edges which do not intersect $A$ red, then it is easily verified that there is no alternating $k$-grid.)
However, our key lemma says that, unless the colouring is almost monochromatic, we can guarantee a {\em near-alternating $k$-grid}, namely, a $2$-edge-coloured $k$-grid such that all horizontal edges but at most one are red, and all vertical edges but at most one are blue (cf.~Figure~\ref{fig_gadget}).

\begin{enumerate}[label=(\textbf{L1})]
\item \label{overview:L1} If $\delta(G)\ge (1/2+\varepsilon)n$, then either $G$ contains a near-alternating $k$-grid, or $G$ is almost monochromatic.
\end{enumerate}

The formal statement of~\ref{overview:L1} is offered by Lemma~\ref{lem:key lemma}. We defer a proof sketch of this result to Section~\ref{sec:key lemma}.
Applying~\ref{overview:L1} repeatedly gives that either $G$ contains linearly many vertex-disjoint near-alternating $k$-grids, or its colouring is almost monochromatic.
In the first case, one can apply the same argument as above, with alternating $k$-grids replaced by near-alternating $k$-grids: this still works since, for each gadget, we can decide to cover its $k^2$ vertices with a perfect matching containing more red edges or more blue edges (as $k \geq 3$).
In the second case, when almost all edges of $G$ have the same colour, one can use standard methods to show that $G$ contains an almost-monochromatic perfect matching.
We remark this here for the benefit of the reader, but will actually not implement these steps since the result for perfect matchings will follow from the more general theorem about tight Hamilton cycles.

The case of an arbitrary number $r$ of colours can be handled by identifying the first $r-1$ colours into one colour (say ``blue") and applying the 2-colour argument outlined above. 
The key point is that it suffices to find a perfect matching of $G$ that either has at least $\frac{n}{rk}+\Omega(n)$ edges in the $r$th colour or at least $\frac{(r-1)n}{rk}+\Omega(n)$ edges in blue, because in the latter case, averaging gives that one of the first $r-1$ colours appears at least $\frac{n}{rk}+\Omega(n)$ times.
Such a perfect matching can be found using the same strategy as above, consisting of first removing near-alternating grids and then covering the rest with a perfect matching $M'$, and by applying a ``biased'' case distinction for~$M'$. Namely, in $M'$, either a $(1-\frac{1}{r})$-fraction of edges is blue or a $\frac{1}{r}$-fraction has the $r$th colour, and whichever case holds, we can use the gadgets to ``push'' the relevant colour(s).

\medskip
{\bf Tight Hamilton cycles.} We now discuss how to find a tight Hamilton cycle with high discrepancy.
For hypergraphs with minimum degree above $1/2$ (to which we refer informally as \emph{Dirac hypergraphs}), there are well-known tools which allow one to connect a given set of disjoint tight paths into a tight Hamilton cycle (see Section~\ref{sec:dirac_graphs}).
Therefore, if these paths are of high discrepancy and only ``miss'' a small number of edges to close a Hamilton cycle, then we are done as, no matter which colours are used in the completion, the discrepancy cannot be ruined anymore.
A crucial step of our argument is to find such paths (the formal statement is offered by Lemma~\ref{lem:path_forest}).
\begin{enumerate}[label=(\textbf{L2})]
    \item \label{overview:L2} If $\delta(G) \ge (1/2+\varepsilon)n$, then $G$ contains a collection of vertex-disjoint tight paths, whose union contains $(1-o(1))n$ edges and has high discrepancy.
\end{enumerate}

In order to show the above, we proceed as follows.
We use our key lemma to find a {\em perfect fractional matching} $\wx$ such that $\wx$ is ``normal'', i.e.~each edge has weight $\Theta(n^{-k+1})$, and such that $\wx$ has high discrepancy, in the sense that the total weight received by some colour class (say ``red") is significantly above the average, i.e.~larger than $n/(rk) + \Omega(n)$ (see Lemma~\ref{lem:normal PFM_discrepancy}, and see Section~\ref{sec:dirac_graphs} for the definition of a perfect fractional matching).
We then use $\wx$ to define a random walk $\cY$ on $V(G)$, such that a path of order $t$ sampled according to the first $t$ vertices of $\cY$ (conditioning on being self avoiding) has the following properties: 
Every vertex is approximately equally likely to be contained in the path and the probability of an edge $e$ appearing in the path is roughly proportional to $\wx(e)$ (see Lemma~\ref{lem:sampling}).

We sample $N$ paths of order $t$ independently, where $t$ is a sufficiently large constant and $N:=n^{t-1/2}$. 
Finally, we define an auxiliary $t$-uniform hypergraph $\cH$ with vertex set $V(G)$ and edges corresponding to the vertex sets of the sampled paths. (The choice of $N$ ensures that this hypergraph is rather dense, but still we do not expect too many parallel edges.)
Owing to the first property of $\cY$ mentioned above, the $t$-graph $\cH$ is almost-regular and, obviously, its maximum $2$-degree is at most $n^{t-2}$.
By a fundamental theorem of Frankl, Rödl~\cite{FR:85} and Pippenger (see~\cite{PS:89}), we can then establish that $\cH$ contains an almost-perfect matching, which corresponds to a collection of vertex-disjoint tight paths of order $t$ covering almost all the vertices. However, our goal is to find such a collection with high discrepancy. Owing to the second property of $\cY$ mentioned above and the fact that the weight of red edges is significantly above the average, the paths in $G$ which correspond to the edges in $\cH$ are likely to contain many red edges. A result concerning finding hypergraph matchings with pseudorandom properties due to Ehard, Glock and Joos~\cite{EGJ:20} (see Theorem~\ref{thm:EGJ}) then allows us to find an almost-perfect matching in $\cH$ such that the corresponding collection of vertex-disjoint paths indeed has high discrepancy. This proves~\ref{overview:L2} and establishes Theorem~\ref{thm:main}.

\section{Preliminaries}\label{sec:preliminaries}
In this section, we collect some preliminary results.

\subsection{Dirac hypergraphs}
\label{sec:dirac_graphs}
In this subsection we state three results concerning hypergraphs with minimum $(k-1)$-degree above $n/2$.
The first is the so-called ``Connecting Lemma'' which allows to connect any two disjoint ordered $(k-1)$-tuples of vertices by a tight path of bounded length.

\begin{lemma}[Lemma 2.4 in~\cite{RRS:08}]\label{lem:path connect}
Let $1/n \ll \eps \ll 1/k$ with $k \in \mathbb{N}$ and $k \ge 3$. 
Let $G$ be a $k$-graph on $n$ vertices with $\delta(G)\ge (1/2+\eps)n$. Then for every two disjoint ordered $(k-1)$-subsets of vertices
of $G$, there is a tight path in $G$ of order at most $2k/\eps^2$ which connects them. 
\end{lemma}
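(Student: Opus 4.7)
The plan is to grow reachable ``end tuples'' from both $\ordered{A}$ and $\ordered{B}$ using the minimum $(k-1)$-degree condition, show the two reachable sets must intersect, and concatenate two short tight paths realizing the overlap into the desired connector. Set $L := \lceil 2/\eps \rceil$ throughout.

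The first key ingredient is a \emph{reachability claim}: for any ordered $(k-1)$-tuple $\ordered{X}$ of distinct vertices and any vertex set $W \subseteq V(G) \setminus X$ with $|W| \leq 2k/\eps^2$, the set $R_L(\ordered{X};W)$ of ordered $(k-1)$-tuples $\ordered{Y}$ disjoint from $W$ that appear as the endpoint of some tight path of order $k-1+L$ starting at $\ordered{X}$ and internally avoiding $W$ satisfies $|R_L(\ordered{X};W)| \geq (1-o(1))(n-|W|)_{k-1}$. The key observation is that from any intermediate endpoint $\ordered{S}=(s_1,\dots,s_{k-1})$, the minimum $(k-1)$-degree provides at least $(1/2+\eps)n - L - |W| \geq (1/2+\eps/2)n$ choices for the next vertex $v$, advancing the endpoint to $(s_2,\dots,s_{k-1},v)$. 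After $L$ iterations this yields at least $((1/2+\eps/2)n)^L$ path-extensions; since an endpoint tuple is determined by the last $k-1$ vertices of its extending path, at most $((1/2+\eps)n)^{L-k+1}$ extensions share any given endpoint. Dividing gives the claimed lower bound when $L$ is sufficiently large compared to $k$ and $1/\eps$.

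Applying the claim in succession, I would first build an arbitrary tight path $P_A$ of order $k-1+L$ from $\ordered{A}$ ending at some $\ordered{S}$, avoiding $B$; then, inside $G' := G - (V(P_A) \setminus S)$, whose minimum $(k-1)$-degree is still at least $(1/2+\eps/2)|V(G')|$, use the reachability claim from $\ordered{S}$ (avoiding $B$) to get $R' \subseteq (V(G'))_{k-1}$ with $|R'| \geq (1-o(1))(|V(G')|)_{k-1}$, and use it from $\overleftarrow{B}$ (avoiding $S$) to get $R'' \subseteq (V(G'))_{k-1}$ with the analogous bound. Writing $\overleftarrow{R''} := \{\overleftarrow{Y} : \ordered{Y} \in R''\}$, both $R'$ and $\overleftarrow{R''}$ occupy almost all of the tuple space, so I can select $\ordered{T} \in R' \cap \overleftarrow{R''}$ together with a path $Q_1$ from $\ordered{S}$ to $\ordered{T}$ and a path $Q_2$ from $\ordered{T}$ to $\ordered{B}$ that are internally vertex-disjoint: for each fixed $\ordered{T}$ in the intersection, the abundance of path choices (order $\Omega(n^{L-k+1})$) vastly exceeds the $O(L)$ vertices to avoid. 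Concatenating $P_A$, $Q_1$, and $Q_2$ produces a tight path from $\ordered{A}$ to $\ordered{B}$ of order at most $3(k-1+L) \leq 2k/\eps^2$ once $L$ is chosen appropriately.

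The main obstacle is the quantitative reachability claim: the naive extension argument over-counts paths reaching the same endpoint, so collisions must be controlled, e.g.\ via the double counting sketched above or by a layered induction showing that at each depth the endpoint distribution spreads rapidly toward uniform on $(V(G))_{k-1}$. Once this is in place, the remainder of the argument amounts to careful bookkeeping of used vertices and the verification that the minimum $(k-1)$-degree is preserved (up to an additive $\eps/2$) after each bounded-size deletion.
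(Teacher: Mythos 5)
This lemma is not proved in the paper at all---it is quoted verbatim from R\"odl--Ruci\'nski--Szemer\'edi~\cite{RRS:08}---so the relevant comparison is with their proof, which uses a ``cascade''-type expansion argument precisely because the kind of direct counting you attempt cannot be pushed far enough.

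The genuine gap is in your reachability claim, and it is quantitative in a way that kills the intersection step. First, the collision bound is wrong: the number of $L$-step extensions of $\ordered{X}$ sharing a prescribed final $(k-1)$-tuple is only bounded by $n^{L-k+1}$ (codegrees can be as large as $n-k+1$, not $(1/2+\eps)n$), so the division gives at best $(1/2+\eps/2)^{L}n^{k-1}$ reachable endpoints, which is exponentially small in $L$. Even taking your stated bounds at face value, the ratio is about $\bigl(\tfrac{1/2+\eps/2}{1/2+\eps}\bigr)^{L}\bigl((1/2+\eps)n\bigr)^{k-1}\approx e^{-2}(n/2)^{k-1}$, i.e.\ a small constant fraction (at most roughly $2^{-(k-1)}\le 1/4$ for $k\ge 3$) of all ordered $(k-1)$-tuples, not $(1-o(1))$ of them. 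Consequently you cannot conclude that $R'\cap\overleftarrow{R''}\neq\emptyset$, and the whole connection collapses. Moreover, this is not a fixable bookkeeping issue in the counting itself: for $k=3$, take $V=A\cup B$ with $|A|=|B|=n/2$ and let the edges be the triples meeting $A$ in an even number of vertices. The codegree is $n/2-O(1)$, every tuple has many one-step extensions (so your counting argument ``runs'' just as before), yet a tight walk started inside $B$ can never leave $B$, so the reachable set is stuck at a constant fraction of the tuple space forever. This shows that any proof of the reachability claim must exploit the extra $\eps n$ in the codegree structurally (as the cascade argument of~\cite{RRS:08} does, by showing that the family of reachable end-tuples keeps expanding until it occupies more than half of the space), rather than merely using it to lower-bound the number of extensions at each step.
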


The second result is the so-called ``Absorbing Lemma'' and shows the existence of an absorber for tight paths in Dirac hypergraphs.
\begin{lemma}[Lemma 2.1 in~\cite{RRS:08}]\label{lem:absorber}
    Let $1/n \ll \beta \ll \mu \ll \eps, 1/k$, with $k \in \mathbb{N}$ and $k \ge 3$.
    Let $G$ be a $k$-graph on $n$ vertices with $\delta(G) \ge (1/2+\eps)n$.
    Then there exists a tight path $P$ with $|V(P)| \le \mu n$ such that for every subset $W \subseteq V(G) \setminus V(P)$ of size $|W| \le \beta n$, there is a tight path $\tilde{P}$ in $G$ with $V(\tilde{P})=V(P) \cup W$ and such that $\tilde{P}$ has the same ends as $P$.
\end{lemma}

The third result concerns the existence of ``balanced" perfect fractional matchings. Let us introduce the relevant definitions. Let $G$ be an $n$-vertex $k$-graph. A \defn{perfect fractional matching} of $G$ is a function $\wx \colon E(G)\to [0,1]$ such that, for every vertex $v\in V(G)$, we have $\sum_{e\ni v}\wx(e)=1$.
Observe that this implies that $\sum_{e \in E(G)} \wx(e)=n/k$, which we will use throughout without mention.
For $\mu \in (0,1]$, a perfect fractional matching $\wx$ is said to be \defn{$\mu$-normal} if $\mu n^{-k+1} \le \wx(e) \le \mu^{-1}n^{-k+1}$ for all $e\in E(G)$.
The following result states that Dirac hypergraphs have normal perfect fractional matchings.

\begin{lemma}[Lemma 4.2 in~\cite{GGJKO:21}]\label{lem:normal PFM}
Let $1/n \ll \mu \ll \eps, 1/k$ with $k\in \bN$ and $k\ge 2$.
Let $G$ be an $n$-vertex $k$-graph with $\delta(G)\ge (1/2+\eps)n$. Then $G$ has a $\mu$-normal perfect fractional matching.
\end{lemma}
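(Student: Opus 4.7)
My approach is to build $\wx$ as a small uniform baseline plus a correction, invoking Dirac-type existence of perfect fractional matchings as a black box.

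A preliminary degree count shows that the minimum $(k-1)$-degree hypothesis forces $\deg_G(v) = \Theta_k(n^{k-1})$ for every vertex $v$. Indeed, by double counting $(k-1)\deg_G(v) = \sum_{S \ni v,\,|S|=k-1} d_G(S)$, where each of the $\binom{n-1}{k-2}$ summands lies between $(1/2+\eps)n$ and $n$. Consequently, the uniform weighting $\wx_u(e) := c$ with $c := \mu n^{-k+1}$ has vertex sums $c \cdot \deg_G(v) \in [\Omega_k(\mu), O_k(\mu)]$, which I can make at most $1/2$ by choosing $\mu$ sufficiently small.

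The remaining task is to produce a correction $\wx_c \ge 0$ with prescribed vertex sums $f(v) := 1 - c\deg_G(v) \in [1/2, 1]$ and with all edge weights $O_k(n^{-k+1})$; setting $\wx := \wx_u + \wx_c$ then yields a $\mu$-normal perfect fractional matching. The existence of some $\wx_c$ meeting these vertex sums (ignoring the edge upper bound) reduces to the standard perfect fractional matching existence problem: for instance, by blowing up each vertex $v$ into roughly $f(v)\cdot T$ copies for a large integer $T$, the resulting $k$-graph still satisfies a Dirac-type $(1/2+\eps/2)$-threshold on its minimum $(k-1)$-degree, and a perfect fractional matching there projects back to a valid $\wx_c$.

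\textbf{Main obstacle.} The principal difficulty is ensuring $\wx_c(e) = O_k(n^{-k+1})$ for every edge. A naive existence argument may place all of $\wx_c$'s weight on as few as $O_k(n)$ edges, assigning each weight $\Theta(1)$ and violating normality. The fix is to average over many different solutions of the inhomogeneous problem and to exploit the symmetry of $G$: under the Dirac condition the perfect fractional matching polytope is ``thick in every direction'', so every edge appears with weight $\Theta_k(n^{-k+1})$ in some solution, and averaging enough such solutions brings a uniform upper bound $O_k(n^{-k+1})$ down to every edge, closing the proof.
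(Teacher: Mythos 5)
The paper does not actually prove this lemma --- it is imported verbatim as Lemma~4.2 of~\cite{GGJKO:21} --- so there is no internal proof to compare with; judged on its own, your proposal has a genuine gap exactly where the lemma is hard. The uniform baseline $\wx_u(e)=\mu n^{-k+1}$ and the degree count are fine, and they correctly reduce the problem to finding a nonnegative correction $\wx_c$ with prescribed vertex sums $f(v)\in[1/2,1]$ \emph{and} all edge weights $O_k(n^{-k+1})$. But that edge upper bound is the entire content of the lemma, and your treatment of it is an assertion rather than an argument: the claim that the polytope of solutions is ``thick in every direction'' is never justified, the appeal to ``the symmetry of $G$'' is vacuous (an arbitrary Dirac hypergraph has no symmetry), and the averaging step is essentially circular. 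An average of solutions of the inhomogeneous problem is again a solution, so ``there is a family of solutions whose average has all edge weights $O_k(n^{-k+1})$'' is equivalent to the statement you are trying to prove; knowing that each edge carries weight $\Theta_k(n^{-k+1})$ in \emph{some} solution gives no control on a common convex combination unless you additionally show that each edge is heavy in only a tiny fraction of the solutions you average, and no such counting (or alternative, e.g.\ an $\ell_2$-minimization/local-switching argument) is offered.

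The reduction step is also flawed as stated. In the blow-up where $v$ becomes roughly $f(v)T$ copies, any $(k-1)$-set containing two copies of the same original vertex has degree $0$, so the blown-up $k$-graph satisfies no minimum $(k-1)$-degree condition at all; and even restricting to sets with distinct projections, a non-constant $f$ can push the codegree-to-order ratio well below $1/2$ (take $f\equiv 1/2$ on $N_G(S)$ and $f\equiv 1$ elsewhere: the ratio is about $1/3$), so the claimed $(1/2+\eps/2)$ Dirac condition needed for your black box need not hold. The reduction could plausibly be repaired (e.g.\ via LP duality for the inhomogeneous system), but the missing upper-bound argument in the final step is fatal as written --- which is precisely why such a normality statement requires the dedicated proof given in~\cite{GGJKO:21} rather than a soft existence-plus-averaging argument.
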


\subsection{Probabilistic tools}

We will often apply the following standard
Chernoff-type concentration inequality (see~\cite[Theorem~1.1]{DP:09}).

\begin{lemma}[Chernoff's inequality]
\label{lem:chernoff}
    Let $X_1,\dots,X_N$ be independent random variables taking values in $[0,1]$, and let $X = \sum_{i=1}^N X_i$. Then for every $0 < \beta < 1$,
    \[
    \PP\Big[|X - \EE[X]| \ge \beta \EE[X]\Big] \le 2\exp\left(-\frac{\beta^2}{3} \EE[X]\right)\, .
    \]
\end{lemma}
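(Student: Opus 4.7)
The plan is to apply the standard Cram\'er--Chernoff (moment-generating-function) method. Set $\mu := \EE[X] = \sum_i \EE[X_i]$ and, for each real $t$, use Markov's inequality on the random variable $e^{tX}$:
\[
\PP[X \ge (1+\beta)\mu] = \PP[e^{tX} \ge e^{t(1+\beta)\mu}] \le e^{-t(1+\beta)\mu}\,\EE[e^{tX}] \qquad (t>0),
\]
and symmetrically for the lower tail with $t<0$. By independence, $\EE[e^{tX}] = \prod_{i=1}^{N} \EE[e^{tX_i}]$, so the task reduces to bounding each factor.

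The next step exploits that $X_i$ takes values in $[0,1]$. Since $x \mapsto e^{tx}$ is convex, for any $x \in [0,1]$ we have $e^{tx} \le 1 + (e^t-1)x$. Taking expectations and using $1+z \le e^z$,
\[
\EE[e^{tX_i}] \le 1 + (e^t - 1)\EE[X_i] \le \exp\bigl((e^t-1)\EE[X_i]\bigr),
\]
so that $\EE[e^{tX}] \le \exp((e^t-1)\mu)$. Plugging this in yields, for $t>0$,
\[
\PP[X \ge (1+\beta)\mu] \le \exp\bigl(-t(1+\beta)\mu + (e^t-1)\mu\bigr).
\]

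Now I would optimise in $t$ by taking $t = \ln(1+\beta)$, which gives
\[
\PP[X \ge (1+\beta)\mu] \le \left(\frac{e^{\beta}}{(1+\beta)^{1+\beta}}\right)^{\mu}.
\]
For $0<\beta<1$, a standard Taylor-series estimate shows $(1+\beta)\ln(1+\beta)-\beta \ge \beta^2/3$, which converts the previous display into $\exp(-\beta^2\mu/3)$. The lower-tail bound is handled analogously: choosing $t = \ln(1-\beta) < 0$ gives
\[
\PP[X \le (1-\beta)\mu] \le \left(\frac{e^{-\beta}}{(1-\beta)^{1-\beta}}\right)^{\mu} \le \exp(-\beta^2\mu/2) \le \exp(-\beta^2\mu/3),
\]
using $(1-\beta)\ln(1-\beta)+\beta \ge \beta^2/2$ for $\beta\in(0,1)$. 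A union bound over the two tails adds the factor of $2$ in front, completing the proof.

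The only place that requires any care is the final inequality $(1+\beta)\ln(1+\beta)-\beta \ge \beta^2/3$ on $(0,1)$; this is the step where the constant in the exponent is pinned down, and it is the main (and essentially only) obstacle, since it is easy to produce a slightly weaker constant by a coarse Taylor bound. One can verify it by defining $\varphi(\beta) := (1+\beta)\ln(1+\beta)-\beta - \beta^2/3$ and checking $\varphi(0)=\varphi'(0)=0$ together with $\varphi''(\beta) = \frac{1}{1+\beta} - \frac{2}{3} \ge 0$ on $[0,1/2]$, with a direct comparison on $[1/2,1]$ taking care of the remaining interval. Everything else is routine manipulation of the MGF.
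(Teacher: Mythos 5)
Your proof is correct, and it is the standard moment-generating-function argument; the paper itself does not prove this lemma but simply cites it as Theorem~1.1 of Dubhashi--Panconesi, where essentially the same Cram\'er--Chernoff derivation is carried out. The one delicate point, $(1+\beta)\ln(1+\beta)-\beta \ge \beta^2/3$ on $(0,1)$, does hold (e.g.\ since $\varphi'(\beta)=\ln(1+\beta)-2\beta/3$ satisfies $\varphi'(1)=\ln 2 - 2/3>0$ and $\varphi'$ is decreasing on $[1/2,1]$, so your monotonicity argument closes the interval $[1/2,1]$ as claimed), so nothing further is needed.
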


As explained at the end of Section~\ref{sec:sketch}, our proof will use a hypergraph matching argument and will need the matching to look random-like with respect to some properties.
In order to achieve that, we use a nibble-type result due to Ehard, Glock and Joos~\cite{EGJ:20}. For a hypergraph $\cH$, define $\Delta(\cH) := \max_{v \in V(\cH)} d_{\cH}(\{v\})$ and $\Delta^c(\cH) := \max_{\{u,v\} \subseteq V(\cH)} d_{\cH}(\{u,v\})$. We will consider edge weight functions $w\colon E(\mathcal{H}) \rightarrow \mathbb{R}_{\geq 0}$ and, for a set $A \subseteq E(\mathcal{H})$, we use the notation $w(A) := \sum_{e \in A}w(e)$.

\begin{theorem}[Theorem~1.2 in~\cite{EGJ:20}]\label{thm:EGJ}
Suppose $\delta\in(0,1)$ and $t\in \bN$ with $t\ge 2$, and set $\gamma:=\delta/50t^2$. Then there exists $\Delta_0$ such that for all $\Delta\ge \Delta_0$, the following holds.
Let $\cH$ be a $t$-uniform hypergraph satisfying $\Delta(\cH)\leq \Delta$, $\Delta^c(\cH)\le \Delta^{1-\delta}$ and $e(\cH)\leq \exp(\Delta^{\gamma^2})$. 
Let $\cW$ be a set of at most $\exp(\Delta^{\gamma^2})$ weight functions on~$E(\cH)$ such that $w(E(\cH))\ge \max_{e\in E(\cH)}w(e)\Delta^{1+\delta}$ for every $w \in \cW$.
Then there exists a matching $\cM$ in~$\cH$ such that $w(\cM)=(1\pm \Delta^{-\gamma}) w(E(\cH))/\Delta$ for every $w \in \cW$.
\end{theorem}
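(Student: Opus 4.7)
The plan is to prove this by the Rödl semi-random nibble, implemented with sharp concentration so as to control all weights simultaneously. I would set $\cH_0 := \cH$ and iterate: in round $i$, activate each surviving edge $e \in \cH_i$ independently with probability $p := \gamma'/\Delta_i$, where $\gamma'$ is a small positive constant and $\Delta_i$ is the current maximum degree. Let $M_i$ be the subset of activated edges that are pairwise vertex-disjoint, and let $\cH_{i+1}$ be the sub-hypergraph on the vertices not covered by $M_0 \cup \dots \cup M_i$. The final matching is $\cM := \bigcup_i M_i$, possibly augmented by a greedy clean-up on the tiny leftover set after $K := \Theta(\log \Delta)$ rounds.

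The invariants I would maintain throughout are: the maximum degree of $\cH_i$ concentrates around $(1-\alpha)^i \Delta$ for some $\alpha = \alpha(\gamma', t) > 0$, the maximum codegree stays below $\Delta^{1-\delta}$ up to a lower-order correction, and for every $w \in \cW$ one has $w(E(\cH_i)) = (1 \pm \eta_i) (1-\alpha)^i w(E(\cH))$ for a slowly-growing error $\eta_i$. If these invariants survive all $K$ rounds, a telescoping computation gives $w(\cM) = \sum_i w(M_i)$, where each $w(M_i)$ is in expectation roughly $p \cdot w(E(\cH_i))$; summing the resulting geometric series and accumulating the round-wise errors then yields $w(\cM) = (1 \pm \Delta^{-\gamma}) w(E(\cH))/\Delta$, as required.

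The heart of the proof is establishing concentration strong enough to union-bound over $|\cW| \leq \exp(\Delta^{\gamma^2})$ weights and over $K$ rounds. Within a single round, $w(E(\cH_{i+1}))$ is a function of the independent activation Bernoullis, and the codegree bound ensures that flipping a single activation alters $w(E(\cH_{i+1}))$ by at most roughly $t \cdot \Delta^{1-\delta} \cdot \max_f w(f)$. A Freedman- or Azuma-type martingale inequality applied to the edge-exposure martingale then yields deviation tails of the form $\exp(-\Delta^{\Omega(\delta)})$, and the specific choice $\gamma = \delta/(50 t^2)$ makes this tail dominate both the factor $\exp(\Delta^{\gamma^2})$ from the union bound over $\cW$ and the polylogarithmic round count.

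I expect the main obstacle to be controlling how the degree and codegree invariants degrade jointly with the weight concentration over $\Theta(\log \Delta)$ rounds: the errors $\eta_i$ compound from round to round, and one must verify that $\eta_K \leq \Delta^{-\gamma}$, which forces the precise quantitative relation between $\gamma$, $\delta$ and $t$ in the statement. A secondary subtlety is that the tracked weight functions must be updated to live on $\cH_i$ rather than $\cH$; formally, one introduces auxiliary weight functions (for instance, $w_v(e) := w(e) \cdot \mathbb{1}_{v \in e}$ for each vertex $v$) to ensure that the degree invariant itself is of the same form as a weight invariant, so that all invariants can be proved by one unified concentration argument rather than separate ad hoc estimates.
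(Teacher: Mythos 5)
This statement is not proved in the paper at all: it is quoted verbatim as Theorem~1.2 of~\cite{EGJ:20} and used as a black box, so there is no in-paper proof to compare against. Your sketch does follow the same general strategy as the original source (a semi-random nibble that builds the matching in bites while tracking all weight functions simultaneously via concentration plus a union bound over $\cW$, with the codegree condition guaranteeing enough independence and the hypothesis $w(E(\cH))\ge \max_e w(e)\Delta^{1+\delta}$ ensuring no single edge dominates), so in spirit you are reconstructing the right proof.

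As a proof, however, what you have is an outline with the crux still missing. The concentration step is the heart of the matter, and as stated it does not work: an edge-exposure Azuma bound with $e(\cH_i)$ activation variables and Lipschitz constant of order $\Delta^{1-\delta}\max_f w(f)$ gives a tail of the form $\exp\bigl(-s^2/(e(\cH_i)\cdot\Delta^{2-2\delta}\max_f w(f)^2)\bigr)$, and since $e(\cH_i)$ may be as large as $\exp(\Delta^{\gamma^2})$ this is useless; one must instead exploit that each edge is activated only with probability $\Theta(1/\Delta_i)$, i.e.\ use a variance-based (Freedman/Talagrand-type) inequality, and the Lipschitz effect of flipping one activation is governed by the \emph{degree} (an activated edge deletes $t$ vertices and hence up to $t\Delta$ edges), not by the codegree as you claim --- the codegree enters elsewhere, namely in showing that the bite behaves almost like independent deletions. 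Two further points need real work: the round-by-round error accumulation $\eta_i$ must be shown to stay below $\Delta^{-\gamma}$ after $\Theta(\log\Delta)$ rounds while the degree invariant $(1-\alpha)^i\Delta$ must remain well above the (static) codegree bound $\Delta^{1-\delta}$, which forces a careful stopping rule; and your optional greedy clean-up is not harmless, since the theorem demands a two-sided estimate $w(\cM)=(1\pm\Delta^{-\gamma})w(E(\cH))/\Delta$ for every $w\in\cW$, and adding leftover edges can overshoot the upper bound for some weight function (near-perfection of $\cM$ is not required, so the clean-up should simply be dropped). Your auxiliary-weight-function trick $w_v(e)=w(e)\IND(v\in e)$ for unifying the degree invariant with the weight invariants is a genuinely good idea and is consistent with how such arguments are organised, but until the concentration mechanism and the error bookkeeping are carried out quantitatively, the proposal remains a plan rather than a proof of the cited theorem.
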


\section{Key lemma}\label{sec:key lemma}

In this section, we state and prove our key structural lemma. Recall from Section~\ref{sec:sketch} that a \emph{$k$-grid} is the $k$-graph on vertices $\{x_{ij}: 1 \le i,j \le k\}$ and with edges $x_{i1} \cdots x_{ik}$ for each $i \in [k]$ (which we will call \emph{horizontal} edges) and $x_{1j} \cdots x_{kj}$ for each $j \in [k]$ (which we will call \emph{vertical} edges).
An \emph{alternating $k$-grid} is a $2$-edge-coloured $k$-grid with all the horizontal edges red and all vertical edges blue.
A \emph{near-alternating $k$-grid} is a $2$-edge-coloured $k$-grid such that all horizontal edges but (at most) one are red, and all vertical edges but (at most) one are blue. Thus, the difference between a near-alternating $k$-grid and an alternating $k$-grid lies in not prescribing the colour of one horizontal and one vertical edge (cf.~Figure~\ref{fig_gadget}).

Our key lemma exploits the specific structure of the given colouring and shows that either it contains a near-alternating $k$-grid or the colouring is almost monochromatic.
While our main result applies to hypergraphs coloured with any number of colours, it suffices to handle here the case of only two colours.

\begin{lemma}[Key lemma]
\label{lem:key lemma} 
Let $1/n \ll \zeta \ll \eps, \rho, 1/k$ with $k\in \bN$ and $k\ge 3$.
Let $G$ be an $n$-vertex $k$-graph whose edges are $2$-coloured.
Assume that all but at most $\zeta n^{k-1}$ $(k-1)$-subsets $S$ of $V(G)$ satisfy $d_G(S) \ge (1/2+\eps)n$.
Then either there exists a near-alternating $k$-grid, or one of the colour classes has size at most $\rho n^k$.
\end{lemma}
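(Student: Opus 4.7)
I would prove the contrapositive: assume both colour classes have more than $\rho n^k$ edges, and construct a near-alternating $k$-grid. After removing edges incident to the at most $\zeta n^{k-1}$ low-degree $(k-1)$-sets (affecting at most $\zeta n^k$ edges), I may assume every $(k-1)$-set of $V(G)$ has degree $\ge (1/2+\eps/2)n$, while both colour classes still have size at least $\rho n^k/2$.

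The main step is to build a $(k-1)\times(k-1)$ "core": pairwise disjoint red edges $H_1,\dots,H_{k-1}$ and pairwise disjoint blue edges $V_1,\dots,V_{k-1}$ satisfying $|H_i\cap V_j|=1$ for all $i,j\in[k-1]$. Such a core uses exactly $k^2-1$ vertices, labelled $a_{ij}:=H_i\cap V_j$, $b_i:=H_i\setminus\bigcup_jV_j$ and $c_j:=V_j\setminus\bigcup_iH_i$. Once the core is built, the full near-alternating grid is completed by choosing a single extra vertex $d$ lying in $N_G(\{b_1,\dots,b_{k-1}\})\cap N_G(\{c_1,\dots,c_{k-1}\})$ and outside the core; this intersection has size at least $2\eps n - o(n)$ by the min-degree condition, so such a $d$ trivially exists. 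Then the $k$-th row $\{c_1,\dots,c_{k-1},d\}$ and $k$-th column $\{b_1,\dots,b_{k-1},d\}$ are edges of $G$ of arbitrary colour, which is exactly what the "near-alternating" definition allows.

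To construct the core, I would proceed iteratively. First pick $k-1$ pairwise disjoint red edges $H_1,\dots,H_{k-1}$, which exist by a simple greedy argument from $|R|\ge\rho n^k/2$. Next, choose for each row $H_i=\{v_{i1},\dots,v_{ik}\}$ an assignment of $k-1$ of its vertices to "column slots" (leaving one vertex as $b_i$) so that, for every $j\in[k-1]$, the $(k-1)$-set $\{v_{1j},\dots,v_{k-1,j}\}$ has many blue neighbours (then each $V_j$ is obtained by appending such a blue neighbour $c_j$, with all the $c_j$'s taken distinct and outside the rows). A supersaturation / averaging argument, using the density of $B$ together with the minimum $(k-1)$-degree condition, should show that the number of valid combinations of row choices and slot assignments is positive; otherwise, the distribution of "blue-rich" $(k-1)$-sets (i.e.\ those with $b(T)$ much larger than a small constant fraction of $n$) would be highly constrained.

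The main obstacle is precisely this last step: ensuring that disjoint red rows together with a slot assignment can be chosen so that every resulting column $(k-1)$-set is blue-rich. This is where the Turán-type and hypergraph shadow arguments flagged in the paper's introduction come in. By Kruskal--Katona, the density $\rho n^k$ of blue forces $\Omega(n^{k-1})$ blue-rich $(k-1)$-sets; the task is to show that these blue-rich sets are sufficiently well-distributed inside red edges to be realised as columns. If no such configuration exists, a stability analysis would force the colouring to be close to a degenerate "block" colouring concentrated on a small structure, from which one deduces that one colour class has size at most $\rho n^k$, giving the contradiction. The assumption $k\ge 3$ is crucial here: the freedom to have one arbitrarily coloured horizontal edge and one arbitrarily coloured vertical edge is exactly what absorbs the unavoidable exceptional row and column that emerges from the iterative / counting construction.
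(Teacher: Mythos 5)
Your reduction of the grid to a ``core'' plus one completing vertex $d$ is fine in outline (it mirrors the paper's Claim about lifting an alternating $(k-1)$-grid), but the proposal has a genuine gap exactly where the lemma's real content lies. The step you yourself flag as ``the main obstacle'' --- choosing $k-1$ disjoint red edges together with a slot assignment so that \emph{every} resulting column $(k-1)$-set is blue-rich, or else deducing near-monochromaticity by ``stability'' --- is essentially a restatement of the lemma, and the Kruskal--Katona observation does not bridge it: density of blue-rich $(k-1)$-sets says nothing about whether they can be realised transversally inside a system of disjoint red edges, and no argument is given for why failure forces one colour class below $\rho n^k$. The paper resolves precisely this dichotomy by a different mechanism: after a cleaning step (so that a $(k-1)$-set contained in one edge of a colour is contained in $\Omega(n)$ such edges), it passes to the $(k-1)$-shadow with a possibly double colouring; if the \emph{double-coloured} shadow edges are dense, Erd\H{o}s's theorem on the zero Tur\'an density of partite hypergraphs yields an alternating $(k-1)$-grid, which lifts to a near-alternating $k$-grid; if they are sparse, a colour-propagation argument along tight paths (built via the Connecting Lemma inside a small random vertex sample chosen to avoid all ``bad'' $(k-1)$-sets) shows that all clean shadow edges carry the same colour, whence one colour class of $G$ has at most $\rho n^k$ edges. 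Neither half of this dichotomy --- the identification of double-coloured shadow edges as the object to which the Tur\'an argument applies, nor the propagation/connectivity argument replacing your unspecified ``stability analysis'' --- appears in your proposal, so the proof is not complete.

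Two smaller points. First, your preprocessing is not sound as stated: deleting all edges through the $\le \zeta n^{k-1}$ exceptional $(k-1)$-sets does not let you ``assume every $(k-1)$-set has degree $\ge (1/2+\eps/2)n$'' --- the exceptional sets now have degree $0$, and a non-exceptional set meeting many exceptional sets can lose a constant fraction of its degree (there can be $\Omega(n)$ exceptional sets clustered around it, since $\zeta n^{k-1}\gg n$ for $k\ge 3$). The paper instead keeps track of ``bad'' and ``clean'' sets rather than restoring a global degree condition. Second, when you complete the grid with $d$, you need the two $(k-1)$-sets $\{b_1,\dots,b_{k-1}\}$ and $\{c_1,\dots,c_{k-1}\}$ themselves to have degree $\ge(1/2+\eps)n$ in $G$; since some $(k-1)$-sets are exceptional, this must be built into the choice of the $b_i$'s and $c_j$'s (as the paper does by counting extensions and discarding the few with small degree), not simply asserted.
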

\noindent
As explained in Section~\ref{sec:sketch}, Lemma~\ref{lem:key lemma} is enough to derive Corollary~\ref{cor:PM}.
Moreover, it is easy to see that the lemma also holds for $k=2$, but then a near-alternating grid is not a suitable gadget, because such a grid (which is a 4-cycle) might only have perfect matchings with one red edge and one blue edge, so that no colour appears more often.
Hence we ignore this case \nolinebreak here.

We now sketch the proof of Lemma~\ref{lem:key lemma}.
It is helpful to first guarantee that if a $(k-1)$-set of vertices is contained in an edge of a certain colour, then it is actually contained in many edges of this colour.
While this is not true for an arbitrary edge-colouring, we can make sure this is the case after deleting only few edges.
The required cleaning procedure is given by the following standard tool. We provide the short proof for the convenience of the reader.
\begin{prop}\label{prop:cleaning}
Let $G$ be a $k$-graph on $n$ vertices.
Then by removing at most $t\binom{n}{k-1}$ edges, one can ensure that in the resulting subhypergraph, every $(k-1)$-set has degree either $0$ or at least~$t$.
\end{prop}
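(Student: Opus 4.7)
The plan is to use a straightforward iterative cleaning algorithm. Define the following procedure on $G$: while there exists a $(k-1)$-subset $S \subseteq V(G)$ with $1 \le d_{G}(S) < t$ in the current hypergraph, remove from $G$ all edges containing $S$. First I would note that this process terminates, since the number of edges strictly decreases in each iteration, and that upon termination every $(k-1)$-set has degree either $0$ or at least $t$, which is exactly the desired property.

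The main task is bounding the total number of removed edges. The key observation I would use is that each $(k-1)$-set $S$ can trigger the removal operation at most once. Indeed, as soon as $S$ is processed, all edges containing $S$ are removed, so $d(S) = 0$ at that moment. Since the procedure only ever removes edges, $d(S)$ can never subsequently become positive, so $S$ will never again satisfy the condition $1 \le d(S) < t$. Each triggering event removes strictly fewer than $t$ edges (by the condition $d(S) < t$ at the time of processing), and each removed edge is removed in exactly one triggering event (it disappears from the hypergraph immediately). Summing over all $(k-1)$-sets that ever trigger, the total number of removed edges is at most $(t-1)\binom{n}{k-1} \le t\binom{n}{k-1}$, as required.

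There is no substantive obstacle here; this is a standard greedy argument. The only conceptual point worth highlighting in the write-up is the monotonicity argument that prevents double-processing of the same $(k-1)$-set, since the interactions between $(k-1)$-sets (a removed edge contains $k$ different $(k-1)$-subsets, whose degrees all drop) might initially raise the worry that we could revisit a set $S$ many times. The fact that $d(S) = 0$ is an absorbing state dispels this concern and yields the clean bound.
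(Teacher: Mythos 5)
Your argument is correct and is essentially the same iterative cleaning procedure as in the paper: repeatedly pick a $(k-1)$-set of degree between $1$ and $t$, delete all edges containing it, and observe that each $(k-1)$-set is processed at most once and contributes fewer than $t$ deleted edges. Your added monotonicity remark just spells out why no set is revisited, which the paper leaves implicit.
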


\proof
As long as there is a $(k-1)$-set $S$ with degree between $1$ and $t$, delete all edges containing~$S$. Obviously, every $(k-1)$-set is considered at most once during this process, and when it is considered, we delete at most $t$ edges.
\endproof

Once $G$ has been cleaned (with respect to both colours and to a suitable choice of $t$), we obtain a subhypergraph $G'$ with the desired property for both colours.
Observe that, since we removed only few edges, for most of the $(k-1)$-subsets of $V(G)$, their degree in $G'$ will still be linearly above $n/2$.

Let $H$ be the $(k-1)$-shadow of $G'$, and equip $H$ with the following 2-colouring of its edges: Colour an edge of $H$ with colour $c$ if it is contained in at least one edge of $G'$ of colour $c$ (and hence at least $t$ such edges). We note that an edge of $H$ can receive both colours (if it is contained in an edge of $G'$ of each of the colours).
As we will show, by choosing $t$ appropriately, it suffices to find an alternating $(k-1)$-grid in $H$ in order to obtain a near-alternating $k$-grid in $G$ (see Claim~\ref{claim:almost_to_grid}).

If $H$ contains many double-coloured edges, then we can easily find an alternating $(k-1)$-grid.
In fact, we can use the following classical result of Erd\H{o}s~\cite{erdos:64b}, which states that the Tur\'an density of $k$-partite $k$-graphs is~$0$.

\begin{theorem}[\cite{erdos:64b}]\label{thm:zero_turan_density_partite_graph}
Let $1/n \ll \eta, 1/\ell$ with $\ell \in \mathbb{N}$.
Let $L$ be any $k$-partite $k$-graph with $\ell$ vertices.
Then any $k$-graph with $n$ vertices and at least $\eta n^k$ edges contains $L$ as a subgraph.
\end{theorem}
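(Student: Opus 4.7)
The plan is to prove the statement by induction on the uniformity $k$. First, note that it suffices to handle the case $L = K^{(k)}_{\ell,\dots,\ell}$, the complete $k$-partite $k$-graph with all parts of size $\ell$, since any $k$-partite $k$-graph on $\ell$ vertices embeds into it. The base case $k=1$ is trivial: a $1$-graph on $n \ge \ell/\eta$ vertices with $\eta n$ edges automatically contains $\ell$ singletons. For the inductive step, I assume the theorem for $(k-1)$-uniformity.

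The key step is to pass from the host $k$-graph $G$ to a $(k-1)$-graph that still has positive density. Define the auxiliary $(k-1)$-graph $H$ on $V(G)$ by declaring a $(k-1)$-set $S$ to be an edge of $H$ precisely when $d_G(S) \ge \eta n/2$. Double-counting gives
\[
k e(G) \;=\; \sum_{S \in \binom{V(G)}{k-1}} d_G(S) \;\le\; e(H)\cdot n + \binom{n}{k-1}\cdot \frac{\eta n}{2},
\]
so $e(H) \ge \eta n^{k-1}/2$ for large $n$. Fix a large constant $N = N(\ell,\eta,k)$ to be chosen later. Applying the induction hypothesis to $H$ with the $(k-1)$-partite $(k-1)$-graph $K^{(k-1)}_{N,\dots,N}$, there exist disjoint sets $V_1,\dots,V_{k-1}\subseteq V(G)$, each of size $N$, such that every tuple $(v_1,\dots,v_{k-1}) \in V_1\times\cdots\times V_{k-1}$ is an edge of $H$, i.e.\ has at least $\eta n/2$ common neighbours in $G$.

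Next, I sample $\ell$ vertices $u_1,\dots,u_\ell$ uniformly (and independently) from $V(G)\setminus(V_1\cup\cdots\cup V_{k-1})$. For a tuple $T=(v_1,\dots,v_{k-1})$, call $T$ \emph{good} if every $u_i$ lies in $N_G(T)$. Since $|N_G(T)|\ge \eta n/2$ and the sampling distribution is essentially uniform on $V(G)$, each tuple is good with probability at least $(\eta/3)^{\ell}$, so the expected number of good tuples is at least $(\eta/3)^\ell N^{k-1}$. Fix a choice of $u_1,\dots,u_\ell$ attaining this bound. The good tuples form a $(k-1)$-partite $(k-1)$-graph on parts $V_1,\dots,V_{k-1}$ with constant density. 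Applying the induction hypothesis a second time (with uniformity $k-1$, target $K^{(k-1)}_{\ell,\dots,\ell}$, and density parameter $(\eta/3)^\ell/(k-1)^{k-1}$), I obtain sets $V_i' \subseteq V_i$ of size $\ell$ such that every tuple in $V_1'\times\cdots\times V_{k-1}'$ is good. Together with $V_k := \{u_1,\dots,u_\ell\}$, these parts yield the desired copy of $K^{(k)}_{\ell,\dots,\ell}$ in $G$.

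The only real obstacle is bookkeeping the hierarchy of constants. The inductive proof requires $1/n \ll 1/N \ll \eta, 1/\ell, 1/k$: first $N$ must be chosen large enough in terms of $\ell,\eta,k$ so that the second application of induction produces $K^{(k-1)}_{\ell,\dots,\ell}$ inside a $(k-1)$-partite $(k-1)$-graph of density $(\eta/3)^\ell/(k-1)^{k-1}$ on parts of size $N$, and then $n$ must be chosen large enough in terms of $N$, $\eta$, $k$ so that the first application produces $K^{(k-1)}_{N,\dots,N}$ inside $H$. A small auxiliary care is also needed to ensure the random vertices are distinct from $V_1\cup\cdots\cup V_{k-1}$ and from one another, but this is trivial for large $n$ since $\ell$ and $(k-1)N$ are constants.
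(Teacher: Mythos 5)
This statement is quoted in the paper as a classical theorem of Erd\H{o}s \cite{erdos:64b} and is not proved there, so there is no in-paper proof to compare against; judged on its own, your argument is correct. It is in the same spirit as Erd\H{o}s's original proof (induction on the uniformity $k$, after reducing to $L=K^{(k)}_{\ell,\dots,\ell}$), but the mechanism differs: Erd\H{o}s reduces uniformity in one shot via convexity, noting that $\sum_{S}\binom{d_G(S)}{t}\ge\binom{n}{k-1}\binom{ke(G)/\binom{n}{k-1}}{t}$, so some $t$-set $T$ has a common link containing $\Omega_{\eta,t}(n^{k-1})$ many $(k-1)$-sets, and a single application of the induction hypothesis inside that link finishes the step. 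You instead pass to the ``dense shadow'' $H$ of $(k-1)$-sets of degree at least $\eta n/2$, pull out a huge $K^{(k-1)}_{N,\dots,N}$ there, and then choose the last part $\{u_1,\dots,u_\ell\}$ at random, using a second application of induction to the graph of good transversal tuples. Both routes are elementary and yield the same quantitative regime; yours costs an extra intermediate constant $N$ and two inductive applications per step, but avoids the Jensen-type counting. Two small points you gloss over are indeed routine: the sampled $u_i$ may collide, but conditioning on (or discarding realizations without) distinctness loses only an $O(\ell^2/n)$ fraction of the expectation; and you do not actually need the parts of the second inductive copy to align with $V_1,\dots,V_{k-1}$ (nor need you argue it), since every edge of the good-tuple graph already has all of $u_1,\dots,u_\ell$ as common neighbours, which is all the final embedding of $K^{(k)}_{\ell,\dots,\ell}$ requires.
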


Call an edge of $H$ bad if it is double-coloured or has small degree in $G'$.
Then by Theorem~\ref{thm:zero_turan_density_partite_graph} and what we observed above, we can assume that only few edges of $H$ are bad. 
We would like to argue that all the edges of $H$ which are not bad are then coloured with the same unique colour, which in turn will imply that $G$ itself is almost-monochromatic.

We proceed as follows:
Let $\ordered{T_1}$ and $\ordered{T_2}$ be arbitrary orderings of any two edges $T_1, T_2 \in E(H)$ which are not bad.
The key observation we use is the following:
Suppose there is a tight walk in $G'$ connecting $\ordered{T_1}$ and $\ordered{T_2}$ such that any $(k-1)$ consecutive vertices of the walk give an edge of $H$ which is not bad. 
The power of the cleaning procedure then comes in handy, as we can claim that each $(k-1)$-set along the walk is contained in edges of $G'$ of the same unique colour.  
Therefore the colour information propagates from $T_1$ to $T_2$ and the tight walk must be monochromatic, implying that $T_1$ and $T_2$ must have the same colour.

To utilize the above observation, we want to be able to connect every (or almost every) pair of ordered edges $\ordered{T_1},\ordered{T_2}$, as above. 
The standard tool to obtain this connection is Lemma~\ref{lem:path connect}.
However, we cannot apply this lemma directly, as the required minimum degree does not hold in $G'$. 
Nevertheless, by randomly sampling a small set of vertices, we can avoid all the bad $(k-1)$-sets (as these are few) and thus apply Lemma~\ref{lem:path connect} within the sample. To make this work, we perform another cleaning step which removes $(k-1)$-sets that intersect with too many \nolinebreak bad \nolinebreak sets.

We are now ready to prove the key lemma.

\lateproof{Lemma~\ref{lem:key lemma}}
Let $k \in \bN$ with $k \ge 3$ and $\eps,\rho>0$ be given, and observe that we can assume $\eps$ to be small enough for Lemma~\ref{lem:path connect} to hold.
Let
$$1/n \ll \zeta \ll \eta \ll \eps, \rho, 1/k\, .$$

Let $G$ be a $2$-edge-coloured (with red and blue) $n$-vertex $k$-graph on $V$ such that all but at most $\zeta n^{k-1}$ of the $(k-1)$-subsets of $V$ satisfy $d_G(S) \ge (1/2+\eps)n$.
We apply Proposition~\ref{prop:cleaning} with parameter $t:=\eta \eps n$ twice, once to the subhypergraph induced by the red edges and once to the subhypergraph induced by the blue edges.
This gives a subhypergraph $G'$ of $G$ with $e(G') \geq e(G) - 2\eta \eps n^k$ such that each $(k-1)$-set $S$ satisfies the following condition for each of the colours: either $S$ is not contained in any edge of $G'$ of this colour, or it is contained in at least $\eta \eps n$ edges of $G'$ of this colour. 

Let $H$ be the $(k-1)$-uniform shadow of $G'$, equipped with the following $2$-colouring of its edges: 
Colour an edge of $H$ red (resp. blue) if it is contained in at least one red (resp. blue) edge of $G'$ (and hence at least $\eta \eps n$ such edges), noting that we allow edges of $H$ to receive both colours.
For the purpose of finding coloured structures in $H$, an edge that has both colours can be used either way.

\begin{claim}\label{claim:almost_to_grid}
    If $H$ contains an alternating $(k-1)$-grid, then $G$ contains a near-alternating $k$-grid.
\end{claim}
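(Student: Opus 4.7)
The plan is to extend the given alternating $(k-1)$-grid in $H$ by one extra row and one extra column, yielding a near-alternating $k$-grid in $G$. Denote the vertices of the $(k-1)$-grid in $H$ by $\{x_{ij}\}_{i,j \in [k-1]}$. For each $i \in [k-1]$, the row $R_i := \{x_{i1},\dots,x_{i(k-1)}\}$ is a red edge of $H$ and, by the definition of the colouring on $H$ together with the cleaning that produced $G'$, it is contained in at least $\eta\eps n$ red edges of $G'$; write $\cR_i$ for the set of corresponding ``red extensions''. Similarly, for each $j \in [k-1]$, the column $C_j := \{x_{1j},\dots,x_{(k-1)j}\}$ is a blue edge of $H$ and gives a set $\cB_j$ of blue extensions with $|\cB_j| \ge \eta\eps n$. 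I will choose vertices $x_{ik} \in \cR_i$ for each $i \in [k-1]$ (to complete row $i$ in red), $x_{kj} \in \cB_j$ for each $j \in [k-1]$ (to complete column $j$ in blue), and finally a corner vertex $x_{kk}$ to close the two remaining edges of the $k$-grid.

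The key step is to choose these extensions so that the two new $(k-1)$-sets $\{x_{k1},\dots,x_{k(k-1)}\}$ and $\{x_{1k},\dots,x_{(k-1)k}\}$ each have degree at least $(1/2+\eps)n$ in $G$, which will be needed to find the corner. For the first set, the number of ordered tuples $(x_{k1},\dots,x_{k(k-1)})$ with $x_{kj}\in\cB_j$, pairwise distinct, and disjoint from the $(k-1)^2$ already-fixed grid vertices is at least $\prod_{j=1}^{k-1}(|\cB_j|-k^2) \ge (\eta\eps n/2)^{k-1}$. On the other hand, at most $(k-1)!\cdot\zeta n^{k-1}$ of these tuples span a $(k-1)$-set of degree less than $(1/2+\eps)n$ in $G$. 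Since $\zeta \ll \eta,\eps,1/k$, a valid choice of $x_{k1},\dots,x_{k(k-1)}$ exists. The same counting argument using $\cR_1,\dots,\cR_{k-1}$, after additionally avoiding the $k-1$ freshly chosen row-$k$ vertices, produces a valid choice of $x_{1k},\dots,x_{(k-1)k}$.

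Finally, both $\{x_{k1},\dots,x_{k(k-1)}\}$ and $\{x_{1k},\dots,x_{(k-1)k}\}$ have at least $(1/2+\eps)n$ neighbours in $G$, so they share at least $2\eps n$ common neighbours. I pick $x_{kk}$ to be any such common neighbour avoiding the $k^2-1$ already-chosen vertices, which is possible for $n$ large. By construction, the $k$-grid on $\{x_{ij}\}_{i,j \in [k]}$ has its first $k-1$ rows red and first $k-1$ columns blue (as edges of $G'$), while the last row $\{x_{k1},\dots,x_{kk}\}$ and last column $\{x_{1k},\dots,x_{kk}\}$ are edges of $G$ of arbitrary colour; this gives the required near-alternating $k$-grid. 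The only real obstacle is the simultaneous control of colour, distinctness, and degree for the two extension $(k-1)$-sets, and this is handled by the counting argument above, which uses the hierarchy $\zeta \ll \eta \ll \eps,1/k$ set up at the start of the proof.
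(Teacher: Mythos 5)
Your proof is correct and follows essentially the same strategy as the paper: extend the $(k-1)$-grid by one extra row and column, choosing extensions that are simultaneously (a) correctly coloured via the cleaning property, (b) pairwise distinct and disjoint from the base grid, and (c) of high degree in $G$ (needed to find the corner), using the hierarchy $\zeta \ll \eta,\eps,1/k$ to absorb the bad $(k-1)$-sets and non-distinct tuples; then pick the corner $x_{kk}$ as a common neighbour. The only cosmetic difference is that you build the extension tuples greedily one vertex at a time (avoiding earlier choices as you go), while the paper bounds the sizes of the full families $\cR,\cB$ of admissible tuples and then observes that vertex-disjoint representatives exist.
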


\begin{proof}[Proof of Claim~\ref{claim:almost_to_grid}]
    Suppose $H$ contains an alternating $(k-1)$-grid.
    Then there exists $W:=\{x_{ij}:1 \le i,j \le k-1\} \subseteq V$ such that $x_{i1} \cdots x_{i(k-1)}$ is a red edge of $H$ for each $i \in [k-1]$ and $x_{1j} \cdots x_{(k-1)j}$ is a blue edge of $H$ for each $j \in [k-1]$.
    Let $\cR$ be the set of ordered tuples $(x_{1k},x_{2k},\dots,x_{(k-1)k})$ such that $x_{1k},x_{2k},\dots,x_{(k-1)k} \in V \setminus W$ are pairwise distinct, $\{x_{i1} \cdots x_{ik}\}$ is a red edge of $G$ for each $i \in [k-1]$, and $d_G(\{x_{1k},x_{2k},\dots,x_{(k-1)k}\}) \ge (1/2+\eps)n$.
    Owing to the fact that a red edge of $H$ can be extended to a red edge of $G'$ (and hence $G$) in at least $\eta \eps n$ ways and that for all but $\zeta n^{k-1}$ of the $(k-1)$-subsets $S$ of $V$ it holds that $d_G(S) \ge (1/2+\eps)n$, we have 
    $|\cR| \ge (\eta \eps n)^{k-1} - n^{k-2} - (k-1)! \cdot \zeta n^{k-1} \ge (\eta \eps n)^{k-1}/2$, where we used $\zeta \ll \eta, \eps,1/k$ (and the $n^{k-2}$ term accounts for the choices of $x_{1k},\dots,x_{(k-1)k}$ which are not pairwise distinct).
    Similarly, let $\cB$ be the set of ordered tuples $(x_{k1},x_{k2},\dots,x_{k(k-1)})$ such that $x_{k1},x_{k2},\dots,x_{k(k-1)} \in V \setminus W$ are pairwise distinct, $\{x_{1j} \cdots x_{kj}\}$ is a blue edge of $G$ for each $j \in [k-1]$, and $d_G(\{x_{k1},x_{k2},\dots,x_{k(k-1)}\}) \ge (1/2+\eps)n$.
    As above for $\cR$, we have $|\cB| \ge (\eta \eps n)^{k-1}/2$.
    Therefore, there exist vertex-disjoint 
    $R \in \cR$ and $B \in \cB$. 
    As $d_G(R),d_G(B) \ge (1/2+\eps)n$, 
    we have $|N_G(R) \cap N_G(B)|\ge 2(1/2+\eps)n - n = 2\eps n$. Hence, there exists $x_{kk} \in (N_G(R) \cap N_G(B)) \setminus W$, i.e.~a vertex $x_{kk}$ which forms an edge of $G$ with both $R$ and $B$ (although we have no control of the colours of these edges).
    This gives a near-alternating $k$-grid in $G$, as desired.    
\end{proof}

Owing to Claim~\ref{claim:almost_to_grid}, from now on we can assume that $H$ does not contain an alternating $(k-1)$-grid.
We show that then $H$ (and thus $G$) must be almost monochromatic.
A $(k-1)$-subset $S$ of $V$ is called \emph{bad} if $d_{G'}(S) < (1/2+\eps/2)n$ or if, seen as an edge of $H$, $S$ is coloured with both colours.
We bound the number of bad sets as follows.

\begin{claim}\label{claim:bad sets}
There are at most $5k\eta n^{k-1}$ bad sets. 
\end{claim}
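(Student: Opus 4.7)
The plan is to split bad sets into two types and bound each separately. A $(k-1)$-subset $S$ of $V$ is bad either because (a) $d_{G'}(S) < (1/2+\eps/2)n$, or because (b) $S$, viewed as an edge of $H$, is double-coloured (i.e.\ contained in both a red edge and a blue edge of $G'$).

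For type (a), I would use the cleaning bound. Any such $S$ either already satisfied $d_G(S) < (1/2+\eps)n$ (there are at most $\zeta n^{k-1}$ such sets by assumption), or had $d_G(S) \ge (1/2+\eps)n$ but lost at least $\eps n/2$ edges of its degree during the cleaning. Since the cleaning removed at most $2\eta\eps n^k$ edges in total, and each edge contributes to the degree of exactly $k$ $(k-1)$-subsets, the total degree loss summed over all $(k-1)$-subsets is at most $2k\eta\eps n^k$. Dividing by the threshold $\eps n/2$, the number of sets losing this much degree is at most $4k\eta n^{k-1}$. So type (a) contributes at most $(\zeta + 4k\eta) n^{k-1}$ sets, which is at most $(4k+1)\eta n^{k-1}$ using $\zeta \ll \eta$.

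For type (b), I would apply Theorem~\ref{thm:zero_turan_density_partite_graph} to the $(k-1)$-uniform hypergraph $D$ on $V$ whose edges are the double-coloured edges of $H$. The main observation is that the $(k-1)$-grid (as a $(k-1)$-uniform hypergraph on $(k-1)^2$ vertices) is $(k-1)$-partite: assigning vertex $x_{ij}$ to class $(i-j) \bmod (k-1)$ makes each horizontal edge $x_{i1}\cdots x_{i(k-1)}$ and each vertical edge $x_{1j}\cdots x_{(k-1)j}$ consist of $k-1$ consecutive residues mod $k-1$, and thus it contains exactly one vertex per class. Therefore, if $e(D) \ge \eta n^{k-1}$, Theorem~\ref{thm:zero_turan_density_partite_graph} (with the appropriate choice of parameter, valid since $\zeta \ll \eta \ll 1/k$) produces a copy of the $(k-1)$-grid inside $D$. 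Every edge of this grid is double-coloured in $H$, so we may designate the horizontal edges as red and the vertical edges as blue to get an alternating $(k-1)$-grid in $H$, contradicting our standing assumption. Hence type (b) contributes at most $\eta n^{k-1}$ sets.

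Adding both contributions yields at most $(4k+2)\eta n^{k-1} \le 5k\eta n^{k-1}$ bad sets (using $k \ge 3$), as claimed. The main obstacle is really just verifying the $(k-1)$-partite structure of the $(k-1)$-grid so that Erd\H{o}s's theorem applies; everything else is a bookkeeping argument on degree loss from cleaning.
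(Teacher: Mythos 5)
Your proof is correct and follows essentially the same route as the paper: the same two-way split into low-$G'$-degree sets (bounded via the assumption on $G$ plus the total degree loss from the $2\eta\eps n^k$ removed edges) and double-coloured shadow edges (bounded via Erd\H{o}s's theorem applied to the $(k-1)$-grid, whose presence would yield an alternating grid). Your explicit verification that the $(k-1)$-grid is $(k-1)$-partite via the diagonal classes $(i-j) \bmod (k-1)$ is a correct filling-in of a detail the paper only asserts, and the final bookkeeping matches.
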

\begin{proof}[Proof of Claim~\ref{claim:bad sets}]
We begin by bounding the number of $(k-1)$-sets $S$ with $d_{G'}(S) < (1/2+\eps/2)n$.
Observe that if $d_{G'}(S) < (1/2+\eps/2)n$, then either $d_G(S) < (1/2+\eps)n$ (i.e.~$S$ has small degree already in $G$), or we removed at least $\eps n/2$ edges of $G$ containing $S$ during the cleaning process (i.e.~when obtaining $G'$ from $G$). 
The former case holds for at most $\zeta n^{k-1}$ sets $S$, by assumption. Let us now bound the number of $S$ in the latter case. 
Since removing an edge of $G$ decreases (by one) the degree of precisely $k$ $(k-1)$-sets, and as the total number of removed edges is at most $2\eta \eps n^k$, the number of such sets $S$ is at most $\frac{k \cdot 2 \eta \eps n^k}{\eps n/2} = 4 k \eta n^{k-1}$.

Next, we bound the number of double-coloured edges of $H$.
    Let $\tilde{H}$ be the subhypergraph of $H$ on $V$ induced by the double-coloured edges.
    Then $e(\tilde{H}) \le \eta n^{k-1}$.
    Indeed, otherwise $\tilde{H}$ contains a copy of the $(k-1)$-grid by Theorem~\ref{thm:zero_turan_density_partite_graph}, 
    since the $(k-1)$-grid is a $(k-1)$-partite $(k-1)$-graph (with each vertex class of size $k-1$).
    However, as each edge of $\tilde{H}$ receives both colours, this gives an alternating $(k-1)$-grid, which is a contradiction.

Summarizing, the number of bad sets is at most 
$(\zeta + 4 k \eta +\eta)n^{k-1} \le 5k\eta n^{k-1}$, 
where we used $\zeta \ll \eta$.  
\end{proof}

Given a $(k-1)$-subset $S$ of $V$, we say that $S$ is \emph{clean} if, for every $0\le j\le k-1$, the number of bad sets $T$ with $|S\cap T|=j$ is at most $\eta^{1/2} n^{k-1-j}$.
We remark that for $j=k-1$ the condition means that $S$ itself is not bad.
\begin{claim}\label{claim:clean_sets}
    All but at most $\eta^{1/3} n^{k-1}$ of the $(k-1)$-subsets of $V$ are clean.
\end{claim}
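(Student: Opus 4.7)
The plan is to use a straightforward double-counting argument based on Claim~\ref{claim:bad sets}. Let $\cB$ denote the set of bad $(k-1)$-subsets of $V$, so by Claim~\ref{claim:bad sets} we have $|\cB| \le 5k\eta n^{k-1}$. For each fixed $j \in \{0, 1, \dots, k-1\}$, I would like to bound the number of $(k-1)$-subsets $S$ that are ``$j$-bad'', meaning they satisfy $|\{T \in \cB : |S \cap T| = j\}| > \eta^{1/2} n^{k-1-j}$.

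First I would count pairs $(S,T)$ with $T \in \cB$ and $|S \cap T| = j$ from the $T$-side: for each $T \in \cB$, the number of $(k-1)$-subsets $S$ of $V$ with $|S \cap T| = j$ is exactly $\binom{k-1}{j}\binom{n-k+1}{k-1-j} \le \binom{k-1}{j} n^{k-1-j}$. Therefore the total number of such pairs is at most $5k\eta \binom{k-1}{j} n^{2k-2-j}$. Now counting from the $S$-side: each $j$-bad $S$ contributes more than $\eta^{1/2} n^{k-1-j}$ pairs, so the number of $j$-bad sets is at most
\[
\frac{5k\eta \binom{k-1}{j} n^{2k-2-j}}{\eta^{1/2} n^{k-1-j}} = 5k\binom{k-1}{j}\eta^{1/2} n^{k-1}.
\]

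Summing over $j \in \{0, 1, \dots, k-1\}$ gives that the total number of non-clean $(k-1)$-subsets is at most $5k \cdot 2^{k-1} \eta^{1/2} n^{k-1}$. Since $\eta \ll 1/k$ in our hierarchy, we have $5k \cdot 2^{k-1} \eta^{1/2} \le \eta^{1/3}$, giving the desired bound $\eta^{1/3} n^{k-1}$. As a sanity check, the case $j = k-1$ forces $S = T$ and the condition $\eta^{1/2} n^0 < 1$ reduces to $S \notin \cB$, matching the remark after the definition of clean; and the case $j = 0$ is automatic since $5k\eta \le \eta^{1/2}$. There is no real obstacle here, as the argument is a routine application of double counting combined with the hierarchy assumptions.
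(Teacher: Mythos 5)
Your proof is correct and follows essentially the same double-counting argument as the paper: bounding, for each $j$, the number of $(k-1)$-sets meeting more than $\eta^{1/2}n^{k-1-j}$ bad sets in exactly $j$ vertices via the bound $|\cB|\le 5k\eta n^{k-1}$ from Claim~\ref{claim:bad sets}, and summing over $j$ using $\eta\ll 1/k$. The only cosmetic difference is that the paper excludes $j=0$ from the sum by noting it cannot occur (exactly your ``automatic'' observation), which does not change the argument.
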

\begin{proof}[Proof of Claim~\ref{claim:clean_sets}]
    For each $0 \le j \le k-1$, we bound the number of $(k-1)$-sets $S$ for which the number of bad sets $T$ with $|S \cap T| = j$ is more than $\eta^{1/2} n^{k-1-j}$.
    If $j=0$, this cannot happen by Claim~\ref{claim:bad sets}, so we can assume $0 < j \le k-1$.
    Given a bad set $T$, the number of $(k-1)$-sets $S$ intersecting $T$ in $j$ vertices is at most $\binom{k-1}{j}n^{k-1-j}$.
    Therefore, using the bound in Claim~\ref{claim:bad sets}, the number of $(k-1)$-subsets of $V$ which are not clean is at most
    \begin{equation*}
        \sum_{j=1}^{k-1} \frac{\binom{k-1}{j} n^{k-1-j} \cdot 5 k \eta n^{k-1}}{\eta^{1/2} n^{k-1-j}} \le \eta^{1/3} n^{k-1}\, . 
    \end{equation*}
\end{proof}
Let $H'$ be the $(k-1)$-subhypergraph of $H$ consisting of the edges which are clean $(k-1)$-sets.
Observe in particular that each edge of $H'$ has a unique colour (because double-coloured edges are bad).
 
\begin{claim}
\label{claim:unique_colour_H'}
    All edges of $H'$ have the same colour.
\end{claim}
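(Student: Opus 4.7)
The plan is a colour-propagation argument along tight paths of $G'$ whose consecutive $(k-1)$-slices are non-bad. If $P=v_1v_2\dots v_\ell$ is a tight path in $G'$ such that every $S_i:=\{v_i,\dots,v_{i+k-2}\}$ is not bad, then each $S_i$ has a unique colour in $H$, and since consecutive slices $S_i,S_{i+1}$ both lie in the $k$-edge $\{v_i,\dots,v_{i+k-1}\}\in E(G')$ of some single colour, their unique colours must both equal this one. By induction the entire path is monochromatic, so its two end $(k-1)$-slices receive the same colour in $H$. Hence it suffices to connect any two given edges $T_1,T_2\in E(H')$ by such a tight path; using density of $H'$ (from Claim~\ref{claim:clean_sets}) and a transitivity step via an auxiliary disjoint clean $(k-1)$-set, we may assume $T_1,T_2$ are vertex-disjoint.

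Given disjoint $T_1,T_2$ with orderings $\ordered{T_1},\ordered{T_2}$, I would apply Lemma~\ref{lem:path connect} inside a random small neighbourhood. Fix a constant $m=m(\eps,k)$ large enough for Lemma~\ref{lem:path connect} to apply with degree parameter $\eps/8$, and note that $\eta\ll\eps,1/k$ permits $\eta m^{k-1}\ll\eps m/k$. Sample $U\subseteq V$ uniformly with $|U|=m$ and $U\supseteq T_1\cup T_2$. Claim~\ref{claim:bad sets} and the cleanness of $T_1,T_2$ yield, by a first-moment estimate, an expected $O(\eta m^{k-1})$ bad $(k-1)$-subsets of $U$; meanwhile, Chernoff on the hypergeometric variables $|N_{G'}(S)\cap U|$ for each non-bad $(k-1)$-subset $S\subseteq U$ (whose mean is $\ge(1/2+\eps/2)m$), combined with a union bound over the $\le m^{k-1}$ such $S$, produces a uniform lower bound $d_{G'[U]}(S)\ge(1/2+\eps/4)m$; both events hold with positive probability. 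Then, letting $D\subseteq U\setminus(T_1\cup T_2)$ be a minimum vertex hitting set for the bad $(k-1)$-subsets of $U$ and setting $U':=U\setminus D$, the hypergraph $G'[U']$ has no bad $(k-1)$-subsets and minimum $(k-1)$-degree $\ge(1/2+\eps/8)|U'|$; Lemma~\ref{lem:path connect} applied to $G'[U']$ then yields the desired tight path connecting $\ordered{T_1}$ and $\ordered{T_2}$, all of whose $(k-1)$-slices lie in $U'$ and are thus non-bad.

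\textbf{Main obstacle.} The delicate point is guaranteeing that $D$ can be chosen disjoint from $T_1\cup T_2$, i.e., that no bad $(k-1)$-subset of $U$ is wholly contained in $T_1\cup T_2$ (other than $T_1,T_2$ themselves, which are clean and hence non-bad). Cleanness controls bad sets with a prescribed intersection with $T_i$ globally, but does not a priori rule out the $O_k(1)$ bad subsets potentially lying in the small set $T_1\cup T_2$. I expect to resolve this by a short preprocessing step: extend $T_1$ and $T_2$ by $O_k(1)$ vertices each to nearby clean sets $T_1',T_2'$ (using the degree slack $d_{G'}(\cdot)\ge(1/2+\eps/2)n$ available for clean slices against the at most $\eta^{1/2}n$ bad one-vertex extensions per slice) chosen so that $T_1'\cup T_2'$ contains no bad $(k-1)$-subset---possible by a counting argument since only $\binom{2(k-1)}{k-1}$ subsets need be checked and typical clean $(k-1)$-sets are bad with probability $O_k(\eta)$---then apply the sampling argument to $T_1',T_2'$ in place of $T_1,T_2$. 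Propagation along the short extensions ensures that $T_1,T_1',T_2',T_2$ all share the same colour in $H$, completing the proof.
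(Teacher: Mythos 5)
Your propagation idea and the overall machinery (random constant-size sample plus Lemma~\ref{lem:path connect}, with cleanness of $T_1,T_2$ controlling bad sets) are the same as in the paper, but there is a genuine gap in the connection step, caused by treating $T_1$ and $T_2$ jointly inside one host set $U\supseteq T_1\cup T_2$. Your first-moment estimate ``expected $O(\eta m^{k-1})$ bad $(k-1)$-subsets of $U$'' is false in general: cleanness of $T_1$ and of $T_2$ \emph{separately} says nothing about bad sets that meet \emph{both} $T_1$ and $T_2$. For $k\ge 4$ it is consistent with both ends being clean that, say, every set $\{u,v,w_1,\dots,w_{k-3}\}$ with $u\in T_1$, $v\in T_2$ is bad (this uses only $\Theta(n^{k-3})\le \eta^{1/2}n^{k-2}$ of the allowance in the $j=1$ cleanness condition). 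Each such set needs only its $k-3$ remaining vertices to land in $U$, so the expected number of bad subsets of $U$ is $\Omega_k(m^{k-3})$, e.g.\ $\Omega(m)$ for $k=4$ --- far more than the $\ll \eps m/k$ you need, and in the extreme example above a hitting set $D$ disjoint from $T_1\cup T_2$ would have to contain essentially all of $U\setminus(T_1\cup T_2)$, destroying the minimum-degree condition. Your ``main obstacle'' paragraph only addresses bad sets wholly contained in $T_1\cup T_2$ (which is indeed the whole issue when $k=3$), and the proposed preprocessing to $T_1',T_2'$ does not touch these straddling bad sets with outside vertices, so the gap remains for $k\ge 4$.

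The paper avoids this problem structurally: it never puts both clean ends into the same host. It samples a binomial set $R$ of constant expected size $C$ with $\eta\ll 1/C$, and connects $\ordered{T_1}$ and $\ordered{T_2}$ \emph{separately} to a common auxiliary ordered $(k-1)$-set $\ordered{T}$ with $T\subseteq R\setminus(T_1\cup T_2)$, applying the connecting lemma once in $G'[R\cup T_1]$ and once in $G'[R\cup T_2]$. Since only one clean set $T_i$ is combined with $R$ at a time, cleanness of that single $T_i$ bounds the expected number of bad $(k-1)$-subsets of $R\cup T_i$ by $k\eta^{1/2}C^{k-1}\le 0.1$, so Markov gives \emph{zero} bad subsets with probability $0.9$ (no hitting-set deletion needed), and the colour propagates $T_1\to T\to T_2$. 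To repair your argument you would either need to adopt this two-step connection through an intermediate non-bad set, or introduce and verify a genuinely new ``pair-cleanness'' condition controlling bad sets by their joint intersection pattern with $T_1$ and $T_2$; as written, the single-sample argument does not go through.
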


Using Claim~\ref{claim:unique_colour_H'}, we can easily complete the proof of the lemma.
Indeed, suppose without loss of generality that the edges of $H'$ are red.
Then for an edge of $G$ to be blue, it has to be either an edge of $G \setminus G'$, or an edge of $G'$ none of whose $(k-1)$-subsets belongs to $H'$.
Using that $e(G) - e(G') \leq 2 \eta \eps n^k$ and $e(H) - e(H') \leq \eta^{1/3} n^{k-1}$ (by Claim~\ref{claim:clean_sets}), we get
that the number of blue edges of $G$ is at most $2 \eta \eps n^k + n \cdot \eta^{1/3} n^{k-1} \le \rho n^k$, as wanted.
We are left to prove Claim~\ref{claim:unique_colour_H'}.

\begin{proof}[Proof of Claim~\ref{claim:unique_colour_H'}]
    Consider two arbitrary edges $T_1, T_2 \in E(H')$. We will show that they have the same colour.
    Fix $C,\beta$ with $\eta \ll 1/C \ll \beta \ll \eps, \rho, 1/k$. 
    We claim that there exists a set $R\In V$ such that
\begin{enumerate}[label=(\roman*)]
    \item \label{key_lemma_(i)} $|R| \ge C/2$;
    \item \label{key_lemma_(ii)} for $i=1,2$, for every $(k-1)$-set $S$ which is contained in $R \cup T_i$ and is not bad, it holds that $|N_{G'}(S) \cap R|\ge (1/2+\eps/8)|R \cup T_i|$;
    \item \label{key_lemma_(iii)} no $(k-1)$-subset contained in $R \cup T_1$ or $R \cup T_2$ is bad.
\end{enumerate}

Let $R \subseteq V$ be obtained by independently including each vertex of $V$ with probability $C/n$.
We show that such $R$ satisfies~\ref{key_lemma_(i)},~\ref{key_lemma_(ii)} and~\ref{key_lemma_(iii)} with positive probability.

Since $\EE[|R|]=C$ and $1/C \ll \beta$, an easy application of Chernoff's inequality (Lemma~\ref{lem:chernoff}) shows that $(1-\beta) C \le |R| \le (1+\beta) C$ with probability at least $0.9$.

Fix $i \in \{1,2\}$, let $S \subseteq V$ be a $(k-1)$-set which is not bad, and define the events $A_S:= \{S \subseteq R \cup T_i\}$ and $B_S:=\{X_S < (1/2+\eps/4)C\}$, where $X_S := |N_{G'}(S) \cap R|$.
Furthermore, let $j=j_S:=|S \cap T_i|$, so $0 \le j \le k-1$.
We are going to show that with probability at least $0.9$, the event $A_S \cap B_S$ does not hold for any such $S$.
Note that $A_S$ and $B_S$ are independent, and $\PP[A_S]=\left(\frac{C}{n}\right)^{k-1-j}$.

Since $S$ is not bad, we have $\EE[X_S] = |N_{G'}(S)| \cdot \frac{C}{n} \ge (1/2 + \eps/2) C$ and, using Chernoff's inequality (Lemma~\ref{lem:chernoff}), we conclude that $\PP[B_S] \le 2 \exp\left(-\frac{(\varepsilon/4)^2}{3} \EE[X_S]\right)=\exp(-\Omega_{\eps}(C))$. 
Therefore, $\PP[A_S \cap B_S] \le \left(\frac{C}{n}\right)^{k-1-j} \cdot \exp(-\Omega_{\eps}(C))$.
By taking the union bound over the at most $2^{k-1}n^{k-1-j}$ $(k-1)$-sets $S \subseteq V$ with $j_S=j$, we see that the probability that $A_S \cap B_S$ holds for some such $S$ is at most $\left(\frac{C}{n}\right)^{k-1-j} \cdot \exp(-\Omega_{\eps}(C)) \cdot 2^{k-1}n^{k-1-j} \le 0.1 k^{-1}$, where we used that $1/C \ll \eps, 1/k$.
The conclusion now follows by taking another union bound over the $k$ choices for~$j$.

Next, let $Y$ be the random variable counting the number of bad sets contained in $R \cup T_i$, and observe that $Y= \sum_{j=0}^{k-1} Y_j$, where $Y_j$ counts the number of bad sets $T \subseteq R \cup T_i$ with $|T \cap T_i|=j$.
Since $T_i$ is a clean set, the number of bad sets $T$ with $|T \cap T_i|=j$ is at most $\eta^{1/2} n^{k-1-j}$.
Therefore, $\EE[Y_j] \le \eta^{1/2} n^{k-1-j} \cdot \left( \frac{C}{n} \right)^{k-1-j} \leq \eta^{1/2}C^{k-1}$ and hence $\EE[Y] \le k \eta^{1/2} C^{k-1}\le 0.1$, using $\eta \ll 1/C$. By Markov's inequality, we have that $Y=0$ with probability at least $0.9$.

Therefore, with positive probability we have that $(1-\beta)C \le |R| \le (1+\beta)C$, the event $A_S \cap B_S$ does not hold for any $(k-1)$-set which is not bad (for both $i=1,2$), and no $(k-1)$-subset contained in $R \cup T_1$ or $R \cup T_2$ is bad. These in turn imply properties~\ref{key_lemma_(i)}-\ref{key_lemma_(iii)}.
Indeed,~\ref{key_lemma_(i)} and~\ref{key_lemma_(iii)} follow directly from the above, and for~\ref{key_lemma_(ii)} it is enough to observe that if $S$ is not bad and is contained in $R \cup T_i$ (i.e.~$A_S$ holds), then $B_S$ cannot hold and thus $X_S \ge (1/2+\eps/4)C \ge (1/2+\eps/8)|R \cup T_i|$, where the first inequality follows from the definition of $B_S$, and the second inequality uses $|R| \le (1+\beta)C$, $|T_i| = k-1$, and $1/C, \beta \ll \eps,1/k$.

We conclude that a set $R$ satisfying~\ref{key_lemma_(i)},~\ref{key_lemma_(ii)} and~\ref{key_lemma_(iii)} does indeed exist. Note that~\ref{key_lemma_(ii)}--\ref{key_lemma_(iii)} imply that $\delta(G'[R \cup T_i]) \geq (1/2 + \varepsilon/8)|R \cup T_i|$. 
Moreover, by~\ref{key_lemma_(i)}, $|R|$ is large enough to apply Lemma~\ref{lem:path connect} to $G'[R \cup T_1]$ and $G'[R \cup T_2]$. 
Now, fix two arbitrary orderings $\ordered{T_1}$ and $\ordered{T_2}$ of $T_1, T_2$, respectively, and let $\ordered{T}$ be an arbitrary ordering of a $(k-1)$-set $T \subseteq R \setminus (T_1 \cup T_2)$.
Using Lemma~\ref{lem:path connect} twice, we find a tight path  $P_1$ connecting $\ordered{T_1}$ and $\ordered{T}$ in $G'[R \cup T_1]$ and a tight path $P_2$ connecting $\ordered{T_2}$ and $\ordered{T}$ in $G'[R \cup T_2]$.
Owing to~\ref{key_lemma_(iii)}, 
we know that every $k-1$ consecutive vertices in $P_1$ form a set which is not bad, which means that all edges of $G'$ containing this set have the same colour. By the definition of the colouring of $H$, every such $(k-1)$-set has the same colour and, in particular, this holds for $T_1$ and $T$. 
By repeating the same argument for $P_2$, this holds for $T_2$ and $T$.
We conclude that $T_1$ and $T_2$ have the same colour, as desired.
\end{proof}
\noindent
This concludes the proof of Lemma~\ref{lem:key lemma}.
\endproof

\section{Perfect fractional matchings with high discrepancy}\label{sec:fractional}
In this section we focus on the random walk which we will use to sample a collection of paths of high discrepancy. 
But first, recalling the definition of perfect fractional matchings from Section~\ref{sec:dirac_graphs}, we introduce the following notation: For a perfect fractional matching $\wx \colon E(G)\to [0,1]$ of a $k$-graph $G$, and for a set $S\subseteq V(G)$ with $|S|\le k$, define 
$\wx(S):=\sum_{e \in E(G)\colon S \subseteq e}\wx(e)$.
Note that $\wx(\Set{v})=1$ for all $v\in V(G)$ and $\wx(S)=0$ for all $S \subseteq V(G)$ with $|S|=k$ and $S \not\in E(G)$. 

We define the following random walk $\cY=(Y_1, Y_2, \dots)$ on $V:=V(G)$.
It begins with an ordered $(k-1)$-tuple $(Y_1,\dots,Y_{k-1}) \in (V)_{k-1}$ chosen according to the following \emph{initial distribution} $\pi\colon (V)_{k-1} \rightarrow [0,1]$.
Pick an ordered $(k-1)$-set $(Y_1,\dots,Y_{k-1}) \in (V)_{k-1}$ at random with probability proportional to $\wx(\cdot)$, that is, for any $\ordered{S} \in (V)_{k-1}$, 
\begin{equation}
\label{eq:RW_initial_distribution}
    \pi(\ordered{S}):= \frac{\wx(S)}{\sum_{\ordered{S'} \in (V)_{k-1}} \wx(S')}\, .
\end{equation}
Observe that the denominator of~\eqref{eq:RW_initial_distribution} can be rewritten as 
\begin{equation}
\label{eq:RW_denominator}
    \sum_{\ordered{S'} \in (V)_{k-1}} \wx(S') = k! \cdot \sum_{e \in E(G)} \wx(e) = (k-1)! \cdot n\, ,
\end{equation}
where we used that every edge contains $k!$ ordered $(k-1)$-sets.
The \emph{transition probability} will also be defined according to $\wx$.
For all $i \ge k-1$, conditional on the outcome of $Y_{i-(k-2)},\dots,Y_i$, we choose the next vertex $Y_{i+1}$ as follows: Let $\ordered{Z_i}:=(Y_{i-(k-2)},\dots,Y_i)$ be the ordered set of the last $k-1$ vertices in the sequence and choose $Y_{i+1}$ with probability proportional to $\wx(Z_i \cup \{\cdot\})$, that is, for any $v \in V \setminus Z_i$,
\begin{equation}
\label{eq:RW_transition}
    \Pr[Y_{i+1}=v|Y_{i-(k-2)},\dots,Y_{i}] = \frac{\wx(Z_{i}\cup \Set{v})}{\sum_{v' \in V \setminus Z_i}\wx(Z_i \cup \Set{v'})} = \frac{\wx(Z_{i}\cup \Set{v})}{\wx(Z_i)} \, ,
\end{equation}
and for any $v \in Z_i$ the transition probability is $0$.
Observe that $\cY$ is equivalent to the random walk $\cZ:=(\ordered{Z_{k-1}},\ordered{Z_k},\dots)$, and we can refer to both. 
In fact, with $(V)_{k-1}$ viewed as the state space, $\cZ$ is a Markov chain
and it is then easy to check that the distribution $\pi$ defined in~\eqref{eq:RW_initial_distribution} is stationary (cf.~\cite[Proposition~5.5]{GGJKO:21}). The important fact of defining the transition probabilities in terms of the perfect fractional matching $\wx$ is that then the random walk behaves uniformly with respect to the visited vertices, in the sense that the distribution of the vertices for $\cY$ to visit at any step is uniform over $V(G)$ as proved below.

\begin{fact}
\label{fact:RW_uniform_vertices}
    For each integer $i \ge 1$ and $v \in V(G)$, we have that $\Pr[Y_i=v]=1/n$.
    Moreover, for each $k \le i \le t$, with $e_i:=\{Y_{i-k+1},\dots,Y_i\}$ denoting the $(i-k+1)$-st edge of $\cY$, the following holds:
    For each $e\in E(G)$, we have that $\Pr[e_i=e]=\frac{k}{n}\wx(e)$.
\end{fact}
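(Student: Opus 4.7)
The plan is to exploit stationarity of $\pi$ for the Markov chain $\cZ$ on $(V)_{k-1}$ (which the paper invokes via~\cite[Proposition~5.5]{GGJKO:21}) together with the symmetry of $\pi$ under coordinate permutations. The latter holds because $\pi(\ordered{S})$ depends only on the underlying unordered set~$S$, so the $k-1$ one-dimensional marginals of $\pi$ all coincide.

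For the first claim, observe that for every $i \ge 1$, $Y_i$ appears as some coordinate of $\ordered{Z_j}$ for some $j \ge k-1$: it is the $i$-th coordinate of the initial tuple $\ordered{Z_{k-1}} = (Y_1,\dots,Y_{k-1})$ when $i \le k-1$, and the last coordinate of $\ordered{Z_i}$ when $i \ge k-1$. By stationarity $\ordered{Z_j} \sim \pi$, so by coordinate symmetry it suffices to compute a single marginal. Using~\eqref{eq:RW_initial_distribution} and~\eqref{eq:RW_denominator},
\[
\Pr[Y_i = v] \;=\; \frac{(k-2)! \sum_{S \ni v,\, |S|=k-1} \wx(S)}{(k-1)! \cdot n}.
\]
The numerator sum equals $(k-1) \sum_{e \ni v} \wx(e) = k-1$, since each edge $e \ni v$ contains exactly $k-1$ of the $(k-1)$-sets through $v$, and $\wx(\Set{v}) = 1$ by definition of a perfect fractional matching. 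Hence $\Pr[Y_i = v] = 1/n$.

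For the second claim, fix $i \ge k$, $e \in E(G)$, and any ordering $\ordered{e} = (v_1, \dots, v_k)$ of $e$. Writing $\ordered{Z_{i-1}} = (Y_{i-k+1}, \dots, Y_{i-1})$, stationarity gives $\ordered{Z_{i-1}} \sim \pi$, and combining with the transition rule~\eqref{eq:RW_transition} yields
\[
\Pr\bigl[(Y_{i-k+1}, \dots, Y_i) = \ordered{e}\bigr] \;=\; \pi\bigl((v_1, \dots, v_{k-1})\bigr) \cdot \frac{\wx(e)}{\wx(\Set{v_1, \dots, v_{k-1}})} \;=\; \frac{\wx(e)}{(k-1)! \cdot n},
\]
where the convenient cancellation between the transition probability and the stationary density is the heart of the argument. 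The transition rule forces any $k$ consecutive vertices of $\cY$ to be pairwise distinct, so $e_i = e$ occurs for exactly one of the $k!$ orderings of $e$. Summing over these orderings gives
\[
\Pr[e_i = e] \;=\; k! \cdot \frac{\wx(e)}{(k-1)! \cdot n} \;=\; \frac{k}{n}\,\wx(e).
\]

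There is no serious obstacle here: the whole argument is essentially a one-line identity once stationarity of $\pi$ and its symmetry under coordinate permutations are in hand. The only bookkeeping to be careful about is the combinatorial factor $k-1$ when converting between $\sum_{S \ni v,\,|S|=k-1} \wx(S)$ and $\sum_{e \ni v} \wx(e)$, and observing that the pairwise distinctness of $k$ consecutive vertices of $\cY$ is automatic from the transition rule zeroing out $Z_i$.
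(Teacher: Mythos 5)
Your proposal is correct and follows essentially the same route as the paper: stationarity of $\pi$ for the chain $\cZ$, the identity $\sum_{\ordered{S'}\in (V)_{k-1}}\wx(S')=(k-1)!\,n$, and the cancellation $\pi(\ordered{S})\cdot \wx(e)/\wx(S)=\wx(e)/((k-1)!\,n)$ summed over the $k!$ relevant orderings. The only (harmless) difference is that for $\Pr[Y_i=v]$ with $i\ge k$ the paper redoes the computation via the law of total probability over $\ordered{Z_{i-1}}$ and the transition step, whereas you shortcut this by noting that $Y_i$ is a coordinate of $\ordered{Z_i}\sim\pi$ and that $\pi$ is exchangeable, which is a valid minor streamlining of the same argument.
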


\proof
Let $v\in V$. For $1 \le i \le k-1$, observe the following identity where the first sum runs over all $\ordered{S} \in (V)_{k-1}$ such that $v$ is the $i$-th element of $\ordered{S}$, and the second sum runs over all $S \in \binom{V}{k-1}$ with $v \in S$:
    \[
        \sum_{\substack{\ordered{S} = (v_1,\dots,v_{k-1}) \\ v_i = v}} \wx(S) = (k-2)! \cdot 
        \sum_{S \in \binom{V}{k-1} : \; v \in S} \wx(S) = (k-1)! \cdot \sum_{e\,:\,e \ni v} \wx(e) = (k-1)! \,.
    \]
    Together with~\eqref{eq:RW_denominator}, we get $\Pr[Y_i=v] = 1/n$.

    Since $\pi$ as defined in~\eqref{eq:RW_initial_distribution} is the stationary distribution of $\cZ$, it holds that $\Pr[\ordered{Z_i}=\ordered{S}] = \pi(\ordered{S})$ for each $i \ge k-1$.
    Let $i \ge k$.
    Then by the law of total probability we have
    \begin{align*}
        \Pr[Y_i=v] &=\sum_{\ordered{S} \in (V)_{k-1}} \Pr\left[Y_i=v | \ordered{Z_{i-1}}=\ordered{S}\right] \cdot \Pr\left[\ordered{Z_{i-1}}=\ordered{S}\right] \\
        &= \sum_{\ordered{S} \in (V)_{k-1}} \Pr\left[Y_i=v | \ordered{Z_{i-1}}=\ordered{S}\right] \cdot \pi(\ordered{S}) \\
        &= \sum_{\substack{\ordered{S} \in (V)_{k-1}: \\ v \not\in S}} \frac{\wx(S \cup \{v\})}{\cancel{\wx(S)}} \cdot \frac{\cancel{\wx(S)}}{(k-1)! \cdot n} = 
        \frac{(k-1)! \cdot \sum_{e \ni v} \wx(e)}{(k-1)! \cdot n} = \frac{1}{n}\, ,
    \end{align*}
    where we used~\eqref{eq:RW_denominator}, that for an edge $e$ with $e \ni v$ there are $(k-1)!$ choices for $\ordered{S} \in (V)_{k-1}$ such that $S \cup \{v\}=e$, and that $\wx$ is a perfect fractional matching.

    For the ``morever"-part of Fact~\ref{fact:RW_uniform_vertices}, let $k \le i \le t$ and $e \in E(G)$.
    Then 
    \begin{align*}
        \Pr\left[e_i = e \right] &= \sum_{\ordered{S} \in (V)_{k-1}} \Pr\big[e_i=e| \ordered{Z_{i-1}}=\ordered{S}\big] \cdot \Pr\big[\ordered{Z_{i-1}}=\ordered{S}\big] \\
        &= \sum_{\ordered{S} \in (V)_{k-1}\colon S\In e} \frac{\wx(e)}{\cancel{\wx(S
        )}} \cdot \frac{\cancel{\wx(S)}}{(k-1)! \cdot n} =\frac{k}{n} \cdot \wx(e)\, ,
    \end{align*} 
    where we used~\eqref{eq:RW_denominator} and that there are $k!$ ways to choose $\ordered{S} \in (V)_{k-1}$ such that $S\In e$. 
\endproof

Despite Fact~\ref{fact:RW_uniform_vertices}, for an arbitrary perfect fractional matching $\wx$, the behaviour of the random walk $\cY$ can still be quite trivial.
For instance, suppose that $\wx$ is indeed a perfect matching $M$, that is $\wx(e)=\IND(e\in M)$.
Then, once the first ordered $(k-1)$-set $\ordered{Z_{k-1}}$ is chosen, the walk is completely deterministic (and uses the same edge in each step).
In order to avoid this in a robust way, we will assume that $\wx$ is $\mu$-normal for some (small) constant $\mu > 0$ (see Section~\ref{sec:dirac_graphs} for the definition).

Ultimately, we would like to use the random walk $\cY$ to sample a collection of tight paths which cover almost all the vertices of $G$ and have high discrepancy.
As proved in Fact~\ref{fact:RW_uniform_vertices}, the probability that $\cY$ sees a certain edge $e$ is proportional to $\wx(e)$.
Therefore, in order to guarantee that $\cY$ sees a substantial number of edges of the same colour, it will be enough that $\wx$ is a normal perfect fractional matching with high discrepancy.
Recall that the total weight assigned by any perfect fractional matching is $n/k$, so, just by averaging, some colour will receive a total weight of at least $n/(rk)$.
By combining Lemma~\ref{lem:normal PFM} with our key lemma (Lemma~\ref{lem:key lemma}), we are able to boost the discrepancy.
Starting with a perfect fractional matching given by Lemma~\ref{lem:normal PFM}, if we can find a near-alternating $k$-grid (which we refer to as a {\em gadget} from now on), then by increasing the weight of the red matching, say, and decreasing the weight of the blue matching by the same amount, the total weight of each vertex remains unchanged, but the total weight of the red edges has increased. Obviously, one gadget will only allow us to perform an insignificant perturbation, but by applying the key lemma iteratively, we can find many edge-disjoint gadgets and together they allow us to perturb the initial perfect fractional matching by a significant amount.

\begin{lemma}
\label{lem:normal PFM_discrepancy}
Let $1/n \ll \mu \ll \eps, 1/r, 1/k$ with $k,r\in \bN$, $k\ge 3$ and $r\ge 2$.
Let $G$ be an $n$-vertex $k$-graph with $\delta(G)\ge (1/2+\eps)n$ whose edges are $r$-coloured. Then $G$ has a $\mu$-normal perfect fractional matching such that the total weight received by some colour class is at least $(1+ \mu)\frac{n}{rk}$.
\end{lemma}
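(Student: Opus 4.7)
The plan is to start with a $\mu_0$-normal perfect fractional matching $\wx_0$ supplied by Lemma~\ref{lem:normal PFM} and to perturb it using many edge-disjoint near-alternating $k$-grids produced by iteratively applying Lemma~\ref{lem:key lemma}. I will take the parameter hierarchy $1/n \ll \mu \ll \zeta \ll \rho \ll \mu_0 \ll \eps, 1/k, 1/r$. To reduce to the $2$-colour setting covered by Lemma~\ref{lem:key lemma}, call an edge \emph{red} if it has colour $r$ and \emph{blue} otherwise, and write $R,B$ for the two classes and $w_R(\wx)=\sum_{e\in R}\wx(e)$, $w_B(\wx)=\sum_{e\in B}\wx(e)$. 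If already $w_B(\wx_0) \ge (1+\mu)(r-1)\tfrac{n}{rk}$, then averaging over the $r-1$ blue colours gives an original colour with weight at least $(1+\mu)\tfrac{n}{rk}$ and $\wx := \wx_0$ works. Otherwise $w_R(\wx_0) > (1-\mu(r-1))\tfrac{n}{rk}$, and the task becomes to boost $w_R$ by at least $\mu n/k$ while staying within the $\mu$-normal range.

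For this boost, I set $G^{(0)} := G$ and $L := \lceil 2\mu n^k / (\mu_0 k(k-2)) \rceil$, and for $i = 1, \dots, L$ apply Lemma~\ref{lem:key lemma} to $G^{(i-1)}$ with parameters $\eps/2$ and $\rho$: if it outputs a near-alternating $k$-grid $\Gamma_i$, include it in the collection and delete its $2k$ edges to form $G^{(i)}$; otherwise stop (see the last paragraph). The iteration remains within the hypothesis of Lemma~\ref{lem:key lemma}: after $i \le L$ steps the sum $\sum_S\bigl(d_G(S)-d_{G^{(i)}}(S)\bigr)$ equals $k$ times the number of removed edges and is thus at most $2k^2 L$, so by Markov's inequality at most $\tfrac{2k^2 L}{\eps n/2} \le \zeta n^{k-1}$ of the $(k-1)$-sets $S$ have lost more than $\eps n/2$ neighbours, and the constraint $\mu \ll \zeta$ is set up precisely so that this last estimate holds. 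If the procedure runs to completion, I put $\delta := \mu_0 n^{-k+1}/2$ and define
\[
\wx(e) := \wx_0(e) + \delta\cdot \IND[e \text{ is horizontal in some } \Gamma_i] - \delta\cdot \IND[e \text{ is vertical in some } \Gamma_i].
\]
Edge-disjointness guarantees that each edge's weight changes by at most $\delta$, so $\wx$ is $\mu$-normal; moreover on each grid the $k$ horizontal and the $k$ vertical edges each form a perfect matching on its $k^2$ vertices, so $\wx$ is still a perfect fractional matching. Since on each grid at least $k-1$ horizontal edges are red and at most one vertical edge is red, the net red-weight gain per grid is at least $(k-2)\delta$, whence $w_R(\wx) \ge w_R(\wx_0) + L(k-2)\delta \ge (1-\mu(r-1))\tfrac{n}{rk} + \mu\cdot\tfrac{n}{k} = (1+\mu)\tfrac{n}{rk}$, as required.

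Finally, if the procedure terminates early at some step $i \le L$ because Lemma~\ref{lem:key lemma} reports that $R$ or $B$ has at most $\rho n^k$ edges in $G^{(i-1)}$, then since at most $2kL \le \rho n^k$ edges have been removed in total (using $\mu \ll \rho\mu_0/k$), the same colour has at most $2\rho n^k$ edges in $G$. Its total weight in any $\mu_0$-normal PFM is therefore at most $2\rho\mu_0^{-1}n$, which is negligible compared to $n/k$: if this colour is blue, averaging the $r-1$ blue colours of $\wx_0$ immediately gives one with weight at least $(1+\mu)\tfrac{n}{rk}$; if it is red, then $w_R(\wx_0) \ge n/k - 2\rho\mu_0^{-1}n \ge (1+\mu)\tfrac{n}{rk}$ holds directly using $r \ge 2$. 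In either sub-case, $\wx_0$ itself is the desired fractional matching. The part I expect to require the most care is the iteration: because a single near-alternating grid can only shift the weight by order $n^{-k+1}$, one must iterate Lemma~\ref{lem:key lemma} superlinearly many times — specifically $\Theta(\mu n^k)$ times — on a shrinking graph, and the sensitivity of the ``bad set'' slack $\zeta n^{k-1}$ in Lemma~\ref{lem:key lemma} to the accumulated degree loss from these many deletions is exactly what forces the chain $\mu \ll \zeta \ll \rho \ll \mu_0$.
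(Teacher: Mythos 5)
Your proposal follows essentially the same route as the paper: reduce to two colours, take a $\mu_0$-normal perfect fractional matching from Lemma~\ref{lem:normal PFM}, iterate Lemma~\ref{lem:key lemma} to extract edge-disjoint near-alternating $k$-grids (tracking the accumulated degree loss of $(k-1)$-sets exactly as you do), and shift weight $\mu_0 n^{-k+1}/2$ from vertical to horizontal edges of each grid, gaining at least $(k-2)\mu_0 n^{-k+1}/2$ of red weight per grid; the only cosmetic differences are your upfront case split on $w_B(\wx_0)$ (the paper instead boosts whichever of red/blue already exceeds its proportional share and averages afterwards) and your choice to find exactly $L=\Theta(\mu n^k/\mu_0)$ grids rather than $\eta n^k$ of them. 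One small slip in your final paragraph: the two sub-cases are transposed --- if the \emph{blue} class is the small one, then it is $w_R(\wx_0)\ge n/k-2\rho\mu_0^{-1}n\ge(1+\mu)\frac{n}{rk}$ that holds directly, whereas if the \emph{red} class is the small one, then the blue weight is at least $n/k-2\rho\mu_0^{-1}n$ and the averaging over the $r-1$ blue colours applies; swapping the two justifications fixes this.
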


\proof
Let 
$$1/n \ll \mu \ll \eta \ll \zeta \ll \rho \ll \mu_0 \ll \eps, 1/r, 1/k\, ,$$
where $\mu_0$ is small enough for Lemma~\ref{lem:normal PFM} to hold on input $k$ and $\eps$, and $\zeta$ is small enough for Lemma~\ref{lem:key lemma} to hold on input $k,\eps/2$ and $\rho/2$. Let $\wx_0$ be a $\mu_0$-normal perfect fractional matching as given by Lemma~\ref{lem:normal PFM}.

Let $c_1, \dots, c_r$ be the $r$ colours used in the edge-colouring of $G$.
Since Lemma~\ref{lem:key lemma} is only stated for $2$-edge-coloured graphs, we will now consider the following $2$-edge-colouring of $G$, obtained by identifying $r-1$ of the colours to a unique colour:
Colour by ``red'' any edge coloured by $c_1$, and by ``blue'' every other edge.

We claim that either $G$ contains $\eta n^k$ edge-disjoint gadgets or one of the colour classes of $G$ has size at most $\rho n^k$.
This follows by applying Lemma~\ref{lem:key lemma} iteratively. 
Suppose indeed that a maximal collection of edge-disjoint gadgets has size $\ell < \eta n^k$, and let $G'$ be the subhypergraph of $G$ obtained by removing all the edges of such gadgets.
Let $\cS$ be the collection of the $(k-1)$-subsets $S$ of $V$ with $d_{G'}(S) < (1/2+\eps/2)n$.
We now bound the size of $\cS$. The total number of removed edges is $2k \ell<2k\eta n^k$. In order to have $S \in \cS$, we must have removed at least $\eps n/2$ edges containing~$S$, and each removed edge decreases the degree of $k$ $(k-1)$-sets by one.
Therefore $|\cS| \le \frac{2k^2 \eta n^k}{\eps n/2} \le \zeta n^{k-1}$, where we used $\eta \ll \zeta, \eps$ for the last inequality.
Since the collection was maximal, invoking Lemma~\ref{lem:key lemma}, we get that one of the colour classes of $G'$ has size at most $\rho n^k/2$.
Therefore one of the colour classes of $G$ has size at most $\rho n^k/2 + 2k \cdot \ell < \rho n^k$, where we used $\ell < \eta n^k$ and $\eta \ll \rho$.

If one of the colour classes of $G$, say $\cC$, has size at most $\rho n^k$, then, since $\wx_0$ is $\mu_0$-normal, this colour class gets a total weight of at most $\sum_{e \in \cC} \wx_0(e) \le |\cC| \mu_0^{-1} n^{-k+1} \le \mu_0^{-1} \rho n$.
Then the other colour class gets a total weight of at least $(1/k- \mu_0^{-1} \rho)n$.
By averaging, $\wx_0$ assigns to one of the colour classes of the original $r$-edge-colouring of $G$ a total weight of at least $\frac{(1/k-\mu_0^{-1}\rho)n}{r-1}  \ge (1+\mu) \frac{n}{rk}$, where we used $\rho \ll \mu_0, 1/r, 1/k$.
In particular, $\wx_0$ is already a desired $\mu$-normal perfect fractional matching with high discrepancy.

We are left with the case where there exists a collection $\cL:=\{L_i:i \in [\eta n^k]\}$ of $\eta n^k$ edge-disjoint gadgets.
Now, either the total weight assigned by $\wx_0$ to the red edges is at least $\frac{n}{rk}$, or the total weight assigned by $\wx_0$ to the blue edges is at least $\frac{(r-1)n}{rk}$.

Suppose we are in the first case.
We will modify $\wx_0$ on the edges of each gadget in $\cL$ to obtain a $\mu$-normal perfect fractional matching $\wx$ with high discrepancy in colour $c_1$.
For $L \in \cL$, let $e_1^L,\dots,e_k^L$ (resp. $f_1^L,\dots,f_k^L)$ be the horizontal (resp. vertical) edges of $L$, and recall these are pairwise distinct.
By the definition of a near-alternating $k$-grid, all edges $e_1^L,\dots,e_k^L$ but at most one are red, and all edges $f_1^L,\dots,f_k^L$ but at most one are blue. 
Define $\wx \colon E(G) \to [0,1]$ as follows:
$\wx(e_i^L)=\wx_0(e_i^L)+ \mu_0 n^{-k+1}/2$ and $\wx(f_i^L)=\wx_0(f_i^L)- \mu_0 n^{-k+1}/2$ for each $i \in [k]$ and $L \in \cL$.
Moreover, set $\wx(e)=\wx_0(e)$ for any other edge $e \in E(G)$.
In other words, $\wx$ is obtained from $\wx_0$ by decreasing the weight of each vertical edge by $\mu_0 n^{-k+1}/2$ and increasing the weight of each horizontal edge by the same quantity, in each of the gadgets in $\cL$. Observe that if a vertex is contained in a gadget, then it belongs to precisely one horizontal edge and one vertical edge of this gadget.
Therefore for every vertex $v \in V(G)$ we have that $\sum_{e \ni v} \wx(e) = \sum_{e \ni v} \wx_0(e) = 1$. Also, for every $e \in E(G)$ we have $\mu n^{-k+1} \le \wx_0(e)-\mu_0 n^{-k+1}/2 \le \wx(e) \le \wx_0(e)+\mu_0 n^{-k+1}/2 \le \mu^{-1} n^{-k+1}$, where we used that $\mu_0 n^{-k+1} \le \wx_0(e) \le \mu_0^{-1} n^{-k+1}$ and $\mu \ll \mu_0$.
This shows that $\wx$ is a $\mu$-normal perfect fractional matching.
Moreover, for each $L \in \mathcal{L}$, the weight given by 
$\wx$ to the red edges in $L$ is bigger by at least $\mu_0 n^{-k+1}/2$ than the weight given by $\wx_0$ to these edges. This is because we increased (by $\mu_0 n^{-k+1}/2$) the weight of at least $k-1 \geq 2$ red edges, and decreased (by the same amount) the weight of at most one such edge. As the gadgets in $\mathcal{L}$ are edge-disjoint and $|\mathcal{L}| = \eta n^k$, 
we conclude that the total weight assigned by $\wx$ to the red edges is at least $\frac{n}{rk}+\eta n^k \mu_0 n^{-k+1} /2\ge (1+\mu)\frac{n}{rk}$, using that $\mu \ll \eta,\mu_0$.
Therefore, the total weight assigned by $\wx$ to the colour class of $c_1$ in the original edge-colouring of $G$ is at least $(1+\mu)\frac{n}{rk}$.

Suppose now that we are in the second case, namely, that the 
total weight assigned by $\wx_0$ to the blue edges is at least $\frac{(r-1)n}{rk}$. We then use the same argument as above to find a $\mu$-normal perfect fractional matching $\wx$ which assigns a total weight of at least $(r-1)(1+\mu)\frac{n}{rk}$ to the blue edges. 
Recall that the blue edges are precisely those coloured by $c_2,\dots,c_r$ in the original colouring of $G$, and thus, by averaging, there is $2 \le i \le r$ such that the total weight assigned by $\wx$ to the colour class of $c_i$ in the original colouring of $G$ is at least $(1+\mu)\frac{n}{rk}$. 
\endproof

Ideally, we would like to use the random walk $\mathcal{Y}$ and Lemma~\ref{lem:normal PFM_discrepancy} to obtain long tight paths with high discrepancy.
However, one remaining problem is that, if we let the random walk continue for too many steps, then with high probability, it will not be self-avoiding anymore. 
It might be possible to analyse the ``self-avoiding'' version of this random walk and show that with high probability, it will cover almost all the vertices and will have high discrepancy.
However, such an analysis might be very intricate. To circumvent this, we prove the following ``sampling lemma'' which allows us, using the random walk, to sample from the set of all tight paths in $G$ of order $t$, where $t$ is a (large) constant, in such a way that every vertex appears in the chosen path with approximately the same probability, and we expect more edges of one specific colour.
In Section~\ref{sec:linear_forest}, we will then produce a large collection of paths sampled from this distribution, and then use a nibble-type argument to select from this collection a large linear forest with high discrepancy.

\begin{lemma}\label{lem:sampling}
    Let $1/n \ll 1/t, \mu \ll \eps, 1/k, 1/r$ with $k,r,t \in \bN$, $k\ge 3$ and $r\ge 2$.
    Let $G$ be an $n$-vertex $k$-graph with $\delta(G)\ge (1/2+\eps)n$ whose edges are $r$-coloured. Let $\Omega$ be the set of all tight paths of order $t$ in $G$. Then, there exists a colour ``red'' and a probability distribution on $\Omega$ such that a randomly chosen element $P\in \Omega$ has the following properties:
    \begin{enumerate}[label=\rm{(\arabic*)}]
        \item \label{eq:sampling_1} for any given tight path $Q$, we have $\prob{P=Q} \le O_{t,\mu}(n^{-t})$;
        \item \label{eq:sampling_2} for every $v\in V(G)$, we have $\Big|\prob{v\in V(P)} - \frac{t}{n}\Big| \le O_{t,\mu}\left(n^{-2}\right)$;
        \item \label{eq:sampling_3} the expected number of red edges in $P$ is at least $\frac{1+\mu}{r}\cdot (t-k+1)-O_{t,\mu}(n^{-1})$.
\end{enumerate}
\end{lemma}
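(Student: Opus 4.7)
The plan is to use the random walk $\cY$ from Section~\ref{sec:fractional} based on a suitable perfect fractional matching of $G$, and to define $P$ as the tight path traced out by $Y_1,\dots,Y_t$ conditioned on these vertices being pairwise distinct. First I would apply Lemma~\ref{lem:normal PFM_discrepancy} to obtain a $\mu$-normal perfect fractional matching $\wx$ of $G$ such that the total weight $\wx(\cR)$ of some colour class $\cR$ — which I call ``red'' — satisfies $\wx(\cR)\ge (1+\mu)\frac{n}{rk}$. I then define $\cY$ via~\eqref{eq:RW_initial_distribution} and~\eqref{eq:RW_transition}, let $A$ denote the event that $Y_1,\dots,Y_t$ are pairwise distinct, and take $P$ to be the tight path $\cY_{1\dots t}$ conditioned on $A$; this yields the desired distribution on $\Omega$.

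The key preliminary step is to bound $\Pr[A^c]$. Within any window of $k$ consecutive indices the $Y$'s are automatically distinct because $\{Y_{j-k+1},\dots,Y_j\}$ is an edge, so only pairs $(i,j)$ with $|i-j|\ge k$ can cause collisions. For such a pair, $\mu$-normality gives $\wx(Z_{j-1}\cup\{v\})\le \mu^{-1}n^{-k+1}$, while $\delta(G)\ge (1/2+\eps)n$ forces $\wx(Z_{j-1})\ge (1/2+\eps)\mu n^{-k+2}$, so the transition probability is at most $O_\mu(n^{-1})$. Combined with $\Pr[Y_i=v]=1/n$ from Fact~\ref{fact:RW_uniform_vertices}, this yields $\Pr[Y_i=Y_j=v]=O_\mu(n^{-2})$, and summing over $v$ and over $O(t^2)$ pairs gives $\Pr[A^c]=O_{t,\mu}(n^{-1})$.

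For~\ref{eq:sampling_1}, I would unfold the definitions and use the cancellation of $\wx(Z_{k-1})$ between the initial distribution $\pi$ and the first transition denominator to obtain
$$\Pr[\cY_{1\dots t}=Q]=\frac{1}{(k-1)!\,n}\cdot\frac{\prod_{i=k}^{t}\wx(e_i)}{\prod_{j=k}^{t-1}\wx(Z_j)}\,.$$
Plugging in the upper bound $\wx(e_i)\le \mu^{-1}n^{-k+1}$ and the lower bound $\wx(Z_j)\ge (1/2+\eps)\mu\, n^{-k+2}$ and collecting powers of $n$ produces $O_{t,\mu}(n^{-t})$; conditioning on $A$ only multiplies this by $1/\Pr[A]=1+O_{t,\mu}(n^{-1})$, which is absorbed into the implicit constant. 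For~\ref{eq:sampling_2}, Fact~\ref{fact:RW_uniform_vertices} gives $\sum_{i=1}^{t}\Pr[Y_i=v]=t/n$, and Bonferroni together with the bound $\Pr[Y_i=Y_j=v]=O_\mu(n^{-2})$ yields $\Pr[v\in V(\cY_{1\dots t})]=t/n\pm O_{t,\mu}(n^{-2})$. To transfer this estimate to the conditional measure, I would bound $\Pr[v\in V(\cY_{1\dots t})\cap A^c]=O_{t,\mu}(n^{-2})$ by summing, over the index $l$ with $Y_l=v$ and a colliding pair $(i,j)$, the three-point probability $\Pr[Y_l=v,\,Y_i=Y_j]\le O_\mu(n^{-2})$ (obtained via one more application of the transition bound); dividing by $\Pr[A]$ preserves the $O_{t,\mu}(n^{-2})$ error.

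For~\ref{eq:sampling_3}, the ``moreover'' part of Fact~\ref{fact:RW_uniform_vertices} gives $\Pr[e_i=e]=(k/n)\wx(e)$ for each $k\le i\le t$, so summing over red edges yields $\Pr[e_i\text{ is red}]=(k/n)\wx(\cR)\ge (1+\mu)/r$, and the unconditional expected number of red edges in $\cY_{1\dots t}$ is at least $(1+\mu)(t-k+1)/r$; passing to the conditional expectation $\EE[\cdot\mid A]$ loses at most $(t-k+1)\Pr[A^c]/\Pr[A]=O_{t,\mu}(n^{-1})$, which gives~\ref{eq:sampling_3}. The main bookkeeping challenge lies in~\ref{eq:sampling_2}, where one must verify the three-point probability estimate so that conditioning on $A$ does not inflate the very small error term; everything else reduces to the Markov chain computations afforded by Fact~\ref{fact:RW_uniform_vertices} and the $\mu$-normality of $\wx$.
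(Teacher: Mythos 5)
Your proposal is correct and follows essentially the same route as the paper: apply Lemma~\ref{lem:normal PFM_discrepancy} to get a $\mu$-normal perfect fractional matching with boosted red weight, run the associated random walk, condition on the first $t$ vertices being distinct, and deduce the three items from Fact~\ref{fact:RW_uniform_vertices} together with the normality and minimum-degree bounds on initial and transition probabilities. The only (harmless) differences are cosmetic: you control $\Pr[A^c]$ and the error terms via direct two- and three-point probability estimates, whereas the paper counts non-self-avoiding walks each having probability $O_{t,\mu}(n^{-t})$, and the paper additionally accounts for the two orientations of a path when defining $\PP[P=Q]$, which only affects constants.
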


\proof
    Let $\wx$ be a $\mu$-normal perfect fractional matching of $G$ such that the total weight received by some colour class, say ``red'', is at least $(1+\mu)\frac{n}{rk}$. This exists by Lemma~\ref{lem:normal PFM_discrepancy}.
    Let $\cY=(Y_1,Y_2,\dots)$ be the random walk defined via $\wx$, with notation as introduced at the beginning of this section. (In particular, we use $\Pr$ as the probability measure corresponding to the random walk, whereas $\PP$ will denote the desired probability measure on~$\Omega$.)
    Here, we will only be interested in the first $t$ vertices of $\cY$ and we note that those form a tight walk of order~$t$.
    Now, sample elements of $\Omega$ according to the first $t$ vertices of $\cY$, conditioned on $\cY$ being self-avoiding up to $Y_t$.
    More precisely, let $\cB$ be the event that $Y_1, \dots, Y_t$ are pairwise distinct, and denote by $Q$ any tight path of order $t$ and by $q_1, \dots, q_t$ the vertices of $Q$ (appearing in this order). Then take the distribution on $\Omega$ where a randomly chosen element $P \in \Omega$ satisfies
    \[
        \PP[P=Q] = \Pr\Big[\{Y_1=q_1,\dots,Y_t=q_t\} \cup \{Y_1=q_t,\dots,Y_t=q_1\}\; |\; \cB\Big]\, ,
    \]
    where we considered both orders of $Q$ since the elements of $\Omega$ are unordered.
    We claim that this distribution on $\Omega$ has the desired properties.
    Before proving that this is the case, we need some preliminary observations.
    
    Since $\wx$ is $\mu$-normal, the probability that the walk starts with $\ordered{S}=(q_1,\dots,q_{k-1})$ is 
    \[
        \pi(\ordered{S}) = \frac{\wx(S)}{\sum_{\ordered{S'} \in (V)_{k-1}} \wx(S')} \le \frac{n \cdot \mu^{-1} n^{-k+1}}{(k-1)! \cdot n} = O_{\mu}(n^{-k+1})\, ,
    \]
    where we used~\eqref{eq:RW_denominator}.
    By using in addition that $\delta(G) \ge (1/2+\eps)n$, we can show that the transition probabilities of $\cY$ are $O_{\mu}(n^{-1})$.
    Indeed, with $\ordered{Z_i}=(Y_{i-(k-2)},\dots,Y_i)$ being the ordered set of the last $k-1$ vertices of $\cY$, for $v \in V(G) \setminus Z_i$ we have
    \[    
        \Pr[Y_{i+1}=v|Y_{i-(k-2)},\dots,Y_{i}] =  \frac{\wx(Z_{i}\cup \Set{v})}{\wx(Z_i)} \le \frac{\mu^{-1} n^{-k+1}}{1/2 \cdot n \cdot \mu n^{-k+1}} = O_{\mu}(n^{-1})\, , 
    \]
    while for $v \in Z_i$ we have $\Pr[Y_{i+1}=v|Y_{i-(k-2)},\dots,Y_{i}]=0$.
    Therefore, by applying the chain rule, $\Pr[Y_1=q_1,\dots,Y_t=q_t]=O_{t,\mu}(n^{-t})$. 
    Moreover, the number of walks of order $t$ which are not self-avoiding is $O_t(n^{t-1})$ and thus $\Pr[\cB^c]=O_t(n^{t-1}) \cdot O_{t,\mu}(n^{-t})=O_{t,\mu}(n^{-1})$.

    Now Item~\ref{eq:sampling_1} follows easily as we have 
    $$\PP[P=Q]\le \frac{\Pr[Y_1=q_1,\dots,Y_t=q_t]+\Pr[Y_1=q_t,\dots,Y_t=q_1]}{\Pr[\cB]} =O_{t,\mu}(n^{-t}),
    $$  where we used that $\Pr[\cB] = 1-O_{t,\mu}(n^{-1})\ge 1/2$.

    Fix $v \in V(G)$ and $1 \le i \le t$.
    The number of walks of order $t$ which are not self-avoiding and whose $i$-th vertex is $v$ is $O_t(n^{t-2})$ and thus $\Pr\left[\{Y_i=v\} \cap \cB^c\right] = 
    O_t(n^{t-2}) \cdot O_{t,\mu}(n^{-t})=O_{t,\mu}(n^{-2})$.
    Therefore,
    \[
        \PP[v \in V(P)] = \Pr\left[\bigcup_{i \in [t]} \{Y_i=v\} \; | \; \cB \right] 
        = \sum_{i \in [t]} \frac{\Pr\big[\{Y_i=v\} \cap \cB \big]}{\Pr[\cB]} \ge tn^{-1}-O_{t,\mu}(n^{-2}) \, ,
    \]
    where we used that the events $\{Y_i=v\} \cap \cB$ with $i \in [t]$ are pairwise disjoint, that $\Pr[\{Y_i=v\}]=n^{-1}$ by Fact~\ref{fact:RW_uniform_vertices}, that 
    $$
    \Pr\big[\{Y_i=v\} \cap \cB \big] = \Pr\big[Y_i=v\big] - \Pr\big[\{Y_i=v\} \cap \cB^c\big] = n^{-1} - O_{t,\mu}(n^{-2}),
    $$
    and that $\Pr[\cB]\le 1$. 
    Similarly, by using $\Pr[\cB]=1-O_{t,\mu}(n^{-1})$, we get that $\PP[v \in V(P)] \leq tn^{-1}+O_{t,\mu}(n^{-2})$.
    This proves Item~\ref{eq:sampling_2}.

    For a given $k \le i \le t$, denote by $e_i:=\{Y_{i-k+1},\dots,Y_i\}$ the $(i-k+1)$-st edge of~$\cY$.
    Let $e \in E(G)$. 
    Similarly as above, $\Pr[\{e_i=e\} \cap \cB^c]=O_{t,\mu}(n^{-k-1})$ because there are $O_t(n^{t-k-1})$ tight walks which are not self-avoiding and satisfy $e_i = e$, and each such walk has probability $O_{t,\mu}(n^{-t})$. Thus, 
    \[
        \Pr\Big[e_i=e \; | \; \cB\Big]=\frac{\Pr\big[e_i=e\big]-\Pr\big[\{e_i=e\} \cap \cB^c\big]}{\Pr[\cB]} \ge \frac{k}{n}\cdot \wx(e) - O_{t,\mu}(n^{-k-1})\, ,
    \]
    where we used Fact~\ref{fact:RW_uniform_vertices} and $\Pr[\cB]\le 1$.   
    Let $\cR$ be the set of edges of $G$ which are coloured red.
    Then 
    \[
        \Pr\Big[e_i\in \cR \; | \; \cB\Big] \ge \sum_{e \in \cR} \left(\frac{k}{n}\cdot \wx(e) - O_{t,\mu}(n^{-k-1})\right) \ge \frac{1+\mu}{r}-O_{t,\mu}(n^{-1})\, ,
    \]
    where we used that $|\cR| \le n^k$ and $\sum_{e \in \cR} \wx(e) \ge (1+\mu)\frac{n}{rk}$.
    Since $P$ is a path of order $t$, it has $t-k+1$ edges, and thus Item~\ref{eq:sampling_3} follows by linearity of expectation. 
\endproof

\section{Finding a linear forest with high discrepancy}\label{sec:linear_forest}

The goal of this subsection is to prove the following result.

\begin{lemma}
\label{lem:path_forest}
    Let $1/n \ll 1/t \ll \beta \ll \mu \ll \eps, 1/k, 1/r$ with $k,r,t\in \bN$, $k\ge 3$, $r\ge 2$.
    Let $G$ be an $n$-vertex $k$-graph with $\delta(G)\ge (1/2+\eps)n$ whose edges are $r$-coloured. Then $G$ contains a collection of vertex-disjoint tight paths of order $t$ such that their union covers all but at most $\beta n$ vertices of $G$ and there is some colour which appears on at least $(1+\mu)\frac{n}{r}$ edges in the paths.
\end{lemma}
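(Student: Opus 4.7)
The plan is to sample many tight paths of order $t$ using Lemma~\ref{lem:sampling}, assemble their vertex-sets into the edges of an auxiliary $t$-uniform hypergraph $\mathcal{H}$ on $V(G)$, and then apply the nibble-type Theorem~\ref{thm:EGJ} to extract a near-perfect matching in $\mathcal{H}$ which preserves a weight function counting red edges. Concretely, first apply Lemma~\ref{lem:sampling} with $2\mu$ in place of $\mu$ to obtain a colour (call it ``red'') and a probability distribution on tight paths of order $t$ satisfying~\ref{eq:sampling_1}--\ref{eq:sampling_3}. Sample $N:=\lceil n^{t-1/2}\rceil$ paths $P_1,\dots,P_N$ independently from this distribution, and let $\mathcal{H}$ be the $t$-uniform multi-hypergraph on $V(G)$ whose edges are $V(P_1),\dots,V(P_N)$ (with repeated vertex-sets kept as separate edges). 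A matching in $\mathcal{H}$ then corresponds to a family of vertex-disjoint tight paths of order $t$ in $G$.

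Second, I would verify the pseudo-random hypotheses of Theorem~\ref{thm:EGJ}. By Lemma~\ref{lem:sampling}\ref{eq:sampling_2}, each $v\in V(G)$ has expected degree $\mathbb{E}[d_{\mathcal{H}}(v)]=Nt/n\pm O_{t,\mu}(N/n^2)=\Theta(n^{t-3/2})$ in $\mathcal{H}$, and Lemma~\ref{lem:chernoff} with a union bound over the $n$ vertices shows that with high probability all degrees lie within a $(1\pm n^{-1/10})$-factor of this. Analogously, since there are at most $O_t(n^{t-2})$ tight paths of order $t$ through any given pair $\{u,v\}$, Lemma~\ref{lem:sampling}\ref{eq:sampling_1} gives expected codegree $O_{t,\mu}(N \cdot n^{-2})=O_{t,\mu}(n^{t-5/2})$, and Chernoff plus a union bound over pairs yields $\Delta^c(\mathcal{H})=O_{t,\mu}(n^{t-5/2})$ with high probability. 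Writing $\Delta:=\max_v d_{\mathcal{H}}(v)=\Theta(n^{t-3/2})$ and setting $\delta:=1/(4t)$, we have $\Delta^{1-\delta}=n^{(t-3/2)(1-1/(4t))}\gg n^{t-5/2}$ for $t$ large, while $e(\mathcal{H})=N\le \exp(\Delta^{\gamma^2})$ trivially for $\gamma:=\delta/(50t^2)$.

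Third, set $w_0\equiv 1$ and let $w_1(V(P_i))$ be the number of red edges of $P_i$ (choosing one representative path per vertex-set; collisions are negligible). By Lemma~\ref{lem:sampling}\ref{eq:sampling_3} and Chernoff, with high probability
\[
w_1(E(\mathcal{H}))\ge N\Bigl(\tfrac{(1+2\mu)(t-k+1)}{r}-O_{t,\mu}(n^{-1})\Bigr),
\]
and the size hypotheses $w_i(E(\mathcal{H}))\ge \max_e w_i(e)\cdot \Delta^{1+\delta}$ both hold since $\Delta^{1+\delta}=n^{t-5/4+o(1)}\ll N$. Theorem~\ref{thm:EGJ} then yields a matching $\mathcal{M}$ with $w_i(\mathcal{M})=(1\pm \Delta^{-\gamma})w_i(E(\mathcal{H}))/\Delta$ for $i=0,1$. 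The matching $\mathcal{M}$ corresponds to a family $\mathcal{F}$ of $|\mathcal{M}|=(1-o(1))n/t$ vertex-disjoint tight paths of order $t$ in $G$; this leaves at most $\beta n$ vertices uncovered, while the number of red edges in $\mathcal{F}$ is at least $(1-o(1))\cdot \tfrac{n}{t}\cdot \tfrac{(1+2\mu)(t-k+1)}{r}\ge (1+\mu)\tfrac{n}{r}$, the last inequality holding for $t$ large relative to $k,r,\mu$.

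The main obstacle I anticipate is coordinating the parameters $N$, $\Delta$, $\delta$, $\gamma$ so that the delicate codegree bound $\Delta^c(\mathcal{H})\le \Delta^{1-\delta}$ required by Theorem~\ref{thm:EGJ} holds, while $\mathcal{H}$ remains dense enough for the weight-preserving matching to cover almost all of $V(G)$ and carry a significant surplus of red weight. The exponent $t-1/2$ in the choice of $N$ is dictated by this balance: large enough to render $\mathcal{H}$ almost-regular of large degree, yet small enough to keep codegrees polynomially smaller than degrees.
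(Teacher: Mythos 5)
Your proof is correct and follows essentially the same route as the paper: sample $N = n^{t-1/2}$ paths from the distribution of Lemma~\ref{lem:sampling}, build an auxiliary $t$-graph on $V(G)$ whose edges are the vertex sets of the paths, verify the pseudo-randomness hypotheses of Theorem~\ref{thm:EGJ} (with the all-ones weight and the ``number of red edges'' weight), and extract a near-perfect matching that inherits the surplus of red weight.

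The one place you diverge from the paper is in how collisions (two sampled paths with the same vertex set) are handled. The paper shows via property \ref{eq:sampling_1} of Lemma~\ref{lem:sampling} and Markov's inequality that w.h.p.\ at most $n^{t-2/3}$ of the $N$ sampled paths are involved in a collision, deletes those, and obtains a \emph{simple} auxiliary hypergraph~$\cH$; for a simple $t$-graph, the codegree bound $\Delta^c(\cH) \le n^{t-2} \le \Delta^{1-\delta}$ is then immediate, and one only needs to check that the weight totals are not significantly affected by the deletion. You instead keep the \emph{multi}-hypergraph and bound $\Delta^c(\cH)$ directly by a Chernoff/union-bound argument over pairs, obtaining the stronger bound $O_{t,\mu}(n^{t-5/2})$. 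Both routes work, since Theorem~\ref{thm:EGJ} permits multi-hypergraphs (the hypothesis $e(\cH)\le\exp(\Delta^{\gamma^2})$ is vacuous for simple hypergraphs of the relevant density, which is a hint that multi-edges are allowed). Two small points to tighten in your writeup: (a) for a pair $\{u,v\}$ whose expected codegree is much smaller than $n^{t-5/2}$, the stated form of Chernoff (Lemma~\ref{lem:chernoff}) does not directly yield the upper bound $d_\cH(\{u,v\}) = O_{t,\mu}(n^{t-5/2})$; you need the one-sided variant for deviations above an upper bound $M$ on the mean (or augment with dummy Bernoullis), which is routine but worth a word; (b) your parenthetical ``choosing one representative path per vertex-set; collisions are negligible'' is unnecessary and in fact at odds with your multi-hypergraph setup — in a multi-hypergraph each edge $e_i$ corresponding to $P_i$ simply carries its own weight $w_2(e_i) := \mathrm{red}(P_i)$, with no ambiguity.
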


\proof
Let
\[
	1/n \ll \delta \ll 1/t \ll \beta \ll \mu \ll \eps, 1/k, 1/r\, ,
\]
where $2\mu$ is given by Lemma~\ref{lem:sampling} on input $\eps, r, k$, and set $\gamma:=\delta/(50t^2)$ and $V:=V(G)$. 

Set $N:=n^{t-1/2}$ and let $\cP:=\{P_i:i \in [N]\}$ be a collection of $N$ tight paths of order $t$ independently sampled according to the distribution given by Lemma~\ref{lem:sampling}.
For a given $v \in V$, define $X_v:=\{i \in [N]:v \in V(P_i)\}$.
Using
Item~\ref{eq:sampling_2} in Lemma~\ref{lem:sampling},
we have $\EE[|X_v|]=N \cdot t n^{-1} \cdot \left[1 \pm O_{t,\mu}(n^{-1})\right] = t n^{t-3/2} \cdot \left[1 \pm O_{t,\mu}(n^{-1})\right]$.
Therefore, by Chernoff's inequality (Lemma~\ref{lem:chernoff}) and a union bound over $v \in V$, w.h.p. we have 
\begin{equation}
\label{eq:X_v}
    |X_v| = (1 \pm \beta/3) t n^{t-3/2}
\end{equation}
for every $v \in V$.
For a path $P \in \mathcal{P}$, let $\text{red}(P)$ be the number of red edges of $P$, and note that $0 \leq \text{red}(P) \leq t-k+1$. Let 
$R:= \sum_{i = 1}^N \text{red}(P_i)$.
By~Item~\ref{eq:sampling_3} of Lemma~\ref{lem:sampling}, $\EE[R] \ge N \cdot \left[(t-k+1) \cdot \frac{1+2\mu}{r} - O_{t,\mu}(n^{-1})\right]$ and by Chernoff's inequality (Lemma~\ref{lem:chernoff}), we have w.h.p. that 
\begin{equation}
\label{eq:red}
    R \ge (1-\beta) \cdot (t-k+1) \cdot \frac{1+2\mu}{r} \cdot n^{t-1/2}\, .
\end{equation}

We now show that we can pass to a large subcollection $\cP' \subseteq \cP$ such that no two paths in $\cP'$ have the same vertex set.
Let $Y$ be the set of pairs $1 \leq i<j \leq N$ such that $V(P_i) = V(P_j)$. We now bound~$|Y|$. 
Given any $t$-subset $S$ of $V$, observe that there are $t!/2$ (unordered) paths $P$ with $V(P)=S$ and that, using Item~\ref{eq:sampling_1} in Lemma~\ref{lem:sampling}, the probability that $P_i = P$ for a given $1 \leq i \leq N$ is $O_{t,\mu}(n^{-t})$.
Therefore, the probability that a given pair $i,j$ belongs to $Y$ is $O_{t,\mu}(n^{-t})$, and thus $\EE[|Y|] \le N^2 \cdot O_{t,\mu}(n^{-t}) = O_{t,\mu}(n^{t-1})$.
It follows from Markov's inequality that w.h.p.~$|Y| \le n^{t-2/3}$.

We now fix such a collection of paths that satisfies~\eqref{eq:X_v},~\eqref{eq:red} and $|Y| \le n^{t-2/3}$.
Let $\mathcal{P}' \subseteq \mathcal{P}$ be obtained from $\mathcal{P}$ by deleting $P_i,P_j$ for every pair $\{i,j\} \in Y$.
Then $|\cP'| \ge |\cP| - 2|Y| \ge (1-\beta/3) n^{t-1/2}$.
Let $\cH$ be the auxiliary $t$-graph on $V$ with edge set $\{V(P):P \in \cP'\}$. 
By the definition of $\cP'$, we have $V(P) \neq V(Q)$ for each $P, Q \in \cP'$, and thus $e(\cH)=|\cP'|$ (i.e., $\mathcal{H}$ has no multiple edges).
We now show that $\cH$ is suitable for an application of Theorem~\ref{thm:EGJ}, by establishing bounds on $\Delta(\cH)$ and $\Delta^c(\cH)$.
Using~\eqref{eq:X_v}, we have $d_{\cH}(\{v\}) \le |X_v| \le (1+\beta/3)t n^{t-3/2}$.
Setting $\Delta:=(1 + \beta/3) t n^{t-3/2}$, we have $\Delta(\cH) \le \Delta$, and it trivially holds that $\Delta^c(\cH) \le n^{t-2} \le \Delta^{1-\delta}$, as $\delta \ll 1/t$.

We would like the matching given by Theorem~\ref{thm:EGJ} to be almost-spanning and with large discrepancy. 
To this end, we define two weight functions $w_1,w_2\colon E(\cH) \rightarrow \mathbb{R}_{\ge 0}$ as follows:
$w_1 \equiv 1$; and for an edge $e \in E(\cH)$ corresponding to a path $P \in \cP'$, we define $w_2(e) := \text{red}(P)$. 
We claim that $w_i(E(\cH)) \ge \max_{e \in E(\cH)} w(e) \Delta^{1+\delta}$ for each $i=1,2$.
For $i=1$, this is obvious as
\[
	w_1(E(\cH)) = e(\cH) \ge (1-\beta/3) n^{t-1/2} \ge \Delta^{1+\delta} = \max_{e \in E(\cH)} w_1(e) \Delta^{1+\delta}\, ,
\]
where the last inequality uses that $\delta \ll 1/t$.
And for $i=2$,  
we have
\begin{align*}
w_2(E(\cH)) &= \sum_{P \in \mathcal{P}'}\text{red}(P) = 
R - \sum_{P \in \mathcal{P}\setminus \mathcal{P}'}\text{red}(P) \geq R - (t-k+1) \cdot 2n^{t-2/3} \\ &\geq (1-2\beta) \cdot (t-k+1) \cdot \frac{1+2\mu}{r} \cdot n^{t-1/2},
\end{align*}
where the first inequality uses $|\cP|-|\cP'| \le 2 |Y| \le 2n^{t-2/3}$ and that each path has at most $t-k+1$ red edges, while the second inequality uses~\eqref{eq:red}.
Therefore
\[
	w_2(E(\cH)) = \Omega(n^{t-1/2}) \ge (t-k+1) \Delta^{1+\delta} \ge  \max_{e \in E(\cH)} w_2(e) \Delta^{1+\delta}\, ,
\]
using that $\Delta = O_t(n^{t-3/2})$ and $\delta \ll 1/t$.

Let $\cM$ be the matching in $\cH$ given by applying Theorem~\ref{thm:EGJ} with 
$\cW := \{w_1,w_2\}$. Using $w_1(E(\cH)) = e(\mathcal{H}) = |\cP'| \ge (1-\beta/3)n^{t-1/2}$ and the guarantees of Theorem~\ref{thm:EGJ}, we have
\[
	|\cM|=w_1(\cM) \ge \left(1-\Delta^{-\gamma}\right) \cdot \frac{w_1(E(\cH))}{\Delta} \ge 
    \left(1-\Delta^{-\gamma}\right) \cdot \frac{(1-\beta/3)n^{t-1/2}}{(1 + \beta/3) t n^{t-3/2}} \geq 
    (1-\beta) \frac{n}{t}\,.
\]
Similarly, for $w_2$ we have
\begin{align*}
	w_2(\cM) \ge \left(1-\Delta^{-\gamma}\right) \cdot \frac{w_2(E(\cH))}{\Delta} & \ge (1-o(1)) \cdot \frac{(1-2\beta) \cdot (t-k+1) \cdot \frac{1+2\mu}{r} \cdot n^{t-1/2}}{(1 + \beta/3) t n^{t-3/2}} \\ &= 
    (1 - o(1)) \cdot 
    \frac{1-2\beta}{1+\beta/3} \cdot \frac{t-k+1}{t} \cdot \frac{1+2\mu}{r} \cdot n \\
    &\ge (1-3\beta) \cdot \frac{1+2\mu}{r} \cdot n \ge (1+\mu)\frac{n}{r}\, ,
\end{align*}
where we used the bound on $w_2(E(\cH))$ established above, the definition of $\Delta$, together with $1/t \ll \beta \ll \mu \ll 1/k$. 

Therefore the matching $\cM$ corresponds to a collection of vertex-disjoint paths of $G$ of order $t$ such that their union covers all but at most $\beta n$ vertices and the colour red appears on at least $(1+\mu)\frac{n}{r}$ edges in the paths, as desired.
\endproof

\section{Proof of the main theorem}\label{sec:main proof}

We are now ready to prove our main result. 

\begin{proof}[Proof of Theorem~\ref{thm:main}]

Let 
\[
    1/n \ll 1/t \ll \beta \ll \mu \ll \eps \ll 1/k, 1/r\,,
\]
where we have assumed without loss of generality that $\eps$ is sufficiently small.
Let $G$ be an $n$-vertex $k$-graph with $\delta(G) \ge (1/2+\eps)n$ whose edges are $r$-coloured.
By Lemma~\ref{lem:absorber}, there exists a tight path $P_0$ such that $|V(P_0)| \le \mu n$ and for each $W \subseteq V(G) \setminus V(P_0)$ of size $|W| \le 3 \beta n$ there is a tight path covering $V(P_0) \cup W$ and with the same ends as $P_0$.
Let $R$ be a random subset of $V(G) \setminus V(P_0)$ obtained by including each vertex with probability $\beta$ independently. Then w.h.p.~it holds that $\beta n/2 \leq |R| \leq 2\beta n$ and 
that $|N_G(S) \cap R| \ge (1/2+\eps/2)|R|$ for every $(k-1)$-subset $S \subseteq V(G)$. Indeed, this can be shown by a standard application of Chernoff's inequality (Lemma~\ref{lem:chernoff}) and a union bound over~$S$.

Let $G':=G \setminus (V(P_0) \cup R)$ and observe that $\delta(G') \ge \delta(G) - |V(P_0) \cup R| \ge (1/2+\eps/2)n \ge (1/2+\eps/2)|V(G')|$.
Then, by Lemma~\ref{lem:path_forest} (with $4\mu$ playing the role of $\mu$), $G'$ contains a collection $\cP:=\{P_i:i \in [N]\}$ of vertex-disjoint tight paths of order $t$ such that their union covers all but at most $\beta n$ vertices, and there is a colour, say red, which appears on at least $(1+4\mu)|V(G')|/r \ge (1+4\mu)(1-\mu-2\beta)n/r \ge (1+\mu)n/r$ edges in the paths.
Therefore, if we manage to connect the paths in $\cP$ into a tight cycle while covering all the vertices then we are done. This connection can be achieved using the standard tools collected in Section~\ref{sec:dirac_graphs}. The details follow. 

Denote by $\ordered{S_i}$ and $\ordered{T_i}$ the ends of $P_i$ for each $0 \le i \le N$.
Add the uncovered vertices of $G'$ to $R$ to get a set $R'$, and observe that 
$|R| \leq |R'| \leq |R| + \beta n$. 
Using multiple applications of the connecting lemma (Lemma~\ref{lem:path connect}), we can connect the path $P_0$ and the paths in $\cP$ into an almost-spanning tight cycle using vertices in $R'$, as proved by the following claim.
\begin{claim}
	For each $0 \le i \le N$, there is a tight path $Q_i$ of order at most $2k/\eps^2$ connecting $\overleftarrow{T_i}$ and $\overleftarrow{S_{i+1}}$ (where indices are modulo $N+1$), 
    such that $V(Q_i) \setminus (T_i \cup S_{i+1}) \subseteq R'$.
	Moreover, we can choose such paths to be pairwise vertex-disjoint. 
\end{claim}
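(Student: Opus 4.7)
The plan is to construct the paths $Q_1,\dots,Q_{N-1}$ one at a time by a greedy/inductive procedure, at each step using the connecting lemma (Lemma~\ref{lem:path connect}) inside an appropriately restricted induced subhypergraph of $G$ so that the new internal vertices are automatically drawn from $R'$ and disjoint from everything already used. Note first that $N\le n/t$, so if every $Q_j$ has order at most $2k/\eps^2$, then the total number of internal vertices ever used by the family $\{Q_j\}_j$ is at most $N\cdot 2k/\eps^2 \le \frac{2kn}{t\eps^2}$, which is $o(\mu n)$ by the hierarchy $1/t\ll\beta\ll\mu\ll\eps,1/k$. In particular this quantity is much smaller than $|R|\ge (\mu-\beta)n$.

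Suppose inductively that $Q_1,\dots,Q_{i-1}$ have already been constructed, that each $Q_j$ (for $j<i$) has order at most $2k/\eps^2$, and that their internal vertex sets (i.e.\ $V(Q_j)\sm(T_j\cup S_{j+1})$) lie in $R'$ and are pairwise disjoint. Let $U_i$ denote the union of these internal vertex sets; by the above $|U_i|\le \frac{2kn}{t\eps^2}$. Put $R'_i:=R'\sm U_i$ and consider the induced subhypergraph $G_i:=G[R'_i\cup T_i\cup S_{i+1}]$. Since $T_i$ and $S_{i+1}$ are the ends of two distinct paths $P_i,P_{i+1}\in \cP$ whose vertex sets lie in $V(G')=V(G)\sm R$, and since $R'_i\In R'$ is disjoint from every $P_j$, the $(k-1)$-tuples $\overleftarrow{T_i}$ and $\overleftarrow{S_{i+1}}$ are disjoint and contained in $V(G_i)$.

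The key point is to verify that $\delta(G_i)\ge (1/2+\eps/4)|V(G_i)|$ so that Lemma~\ref{lem:path connect} can be applied inside $G_i$. For any $(k-1)$-subset $S\In V(G_i)$, the choice of $R$ ensures $|N_G(S)\cap R|\ge (1/2+\eps/2)|R|$, and since $|R'_i\cup T_i\cup S_{i+1}|\ge |R|-|U_i|$ we have
\[
|N_{G_i}(S)|\ge (1/2+\eps/2)|R|-|U_i|-2(k-1)\ge (1/2+\eps/4)|V(G_i)|,
\]
using $|V(G_i)|\le (\mu+\beta)n+2(k-1)$, $|R|\ge (\mu-\beta)n$, $|U_i|=o(\mu n)$, and $\mu\ll\eps$. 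Hence Lemma~\ref{lem:path connect} applied in $G_i$ produces a tight path $Q_i$ of order at most $2k/\eps^2$ which connects $\overleftarrow{T_i}$ and $\overleftarrow{S_{i+1}}$. Because the first and last $k-1$ vertices of $Q_i$ already exhaust $T_i\cup S_{i+1}$, every remaining (internal) vertex of $Q_i$ lies in $R'_i=R'\sm U_i$, so $Q_i$ is internally disjoint from $U_i$, from all the $P_j$'s, and from $T_j\cup S_{j+1}$ for $j\ne i$ (as these sets are outside $R'$). This preserves the inductive hypothesis and proves the claim.

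The only step requiring care is the degree computation above, but it amounts to showing that after removing a set $U_i$ of size $o(\mu n)$ from $R$, the minimum $(k-1)$-degree in $R'_i\cup T_i\cup S_{i+1}$ is still comfortably above half, which follows immediately from the hierarchy of constants and the random choice of $R$; I do not foresee any substantive obstacle.
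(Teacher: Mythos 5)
Your argument is essentially the same as the paper's: the paper takes a maximal family of valid connecting paths and derives a contradiction if it is short of $N-1$, while you build the family one path at a time by induction; both reduce to applying Lemma~\ref{lem:path connect} inside $G$ restricted to (the still-unused part of) $R'$ together with the two end-tuples, justified by the pseudorandomness of $R$ and the hierarchy $1/t \ll \beta \ll \mu \ll \eps$. One tiny slip: you bound $|V(G_i)|$ by $(\mu+\beta)n + 2(k-1)$, but since $|R'| \le |R| + \beta n \le (\mu + 2\beta)n$, the correct bound is $(\mu + 2\beta)n + 2(k-1)$; this changes nothing as $\beta \ll \mu$.
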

\begin{proof}
	Suppose we can find vertex-disjoint connecting paths $Q_0, \dots, Q_{m-1}$ as in the statement of the claim, and let $m$ be as large as possible.
	If we are not done yet, then $m \le N$. The union of $Q_0, \dots, Q_{m-1}$ covers at most $m \cdot 2k/\eps^2 \le 2kn/(\eps^2 t)$ vertices of $R'$, where we used that $N \le n/t$.
	Let $R''$ denote the subset of $R'$ consisting of the uncovered vertices.
	Then for every $(k-1)$-subset $S \subseteq V(G)$ we have 
    $|N_G(S) \cap R''| \ge 
    (1/2 + \eps/2)|R| - |R' \setminus R''| \geq 
    (1/2+\eps/4)|R'' \cup T_{m+1} \cup S_{m+2}|$, where we used that $|R| \geq |R'| - \beta n$, $|R' \setminus R''| \leq 2kn/(\eps^2 t)$ and
    $1/t \ll \beta \ll \eps, 1/k$.
	Therefore, we can apply Lemma~\ref{lem:path connect} to $G[R'' \cup T_{m+1} \cup S_{m+2}]$ to get a tight path of order at most $2k/\eps^2$ connecting $\overleftarrow{T_{m+1}}$ and $\overleftarrow{S_{m+2}}$, which is vertex-disjoint from $Q_0, \dots, Q_{m-1}$.
	This contradicts the maximality of $m$.
\end{proof}
	Observe that $C:=\bigcup_{0 \le i \le N} (P_i \cup Q_i)$ is a tight cycle. Now let $W \subseteq R'$ be the subset of vertices not covered by $C$ and observe that clearly $|W| \le |R'| \le 3\beta n$.
    By the property of the absorbing path $P_0$, there exists a tight path $\tilde{P_0}$ which covers $V(P_0) \cup W$ and has the same ends as $P_0$, i.e. $\overrightarrow{S_0}$ and $\overrightarrow{T_0}$.
    It follows that $\tilde{C}:=\tilde{P_0} \cup \bigcup_{1 \le i \le N} (P_i \cup Q_i)$ is a tight Hamilton cycle of $G$ and, since each edge of $P_1 \cup \dots \cup P_N$ is an edge of $\tilde{C}$ as well, then $\tilde{C}$ has at least $(1+\mu)n/r$ red edges, as desired.
\end{proof}

\section{Concluding remarks}

The main result of this paper offers a discrepancy version of the celebrated result of R\"odl, Ruci\'nski and Szemer\'edi~\cite{RRS:08}, and determines the minimum $(k-1)$-degree threshold for high discrepancy of tight Hamilton cycles and perfect matchings; both of these thresholds equal $1/2$, which is also the existence threshold for these structures. 
In the following we discuss some natural open problems for further research.

\begin{itemize}
\item[\textbullet]
A very natural question is to study minimum-degree discrepancy thresholds for the $j$-degree\footnote{The \defn{minimum $j$-degree} of a hypergraph is the minimum of $d(S)$ over all sets $S$ of $j$ vertices.} with $j < k-1$. 
We remark that for this question, the existence threshold for perfect matchings (i.e.~the minimum $j$-degree guaranteeing the existence of a perfect matching) is mostly not known.
We wonder if, for $k$-uniform hypergraphs with $k \geq 3$, there is some $j$ such that the $j$-degree discrepancy threshold is strictly larger than the corresponding existence threshold (as is the case for graphs). 

\textbf{Remark added.} This problem has been solved for each $j \neq 1$ in a simultaneous work of Balogh, Treglown and Z\'arate-Guer\'en~\cite{BTZ-G:24}, who also provided a construction showing that, for $j=1$ and $k=3$, the discrepancy threshold is significantly larger than the existence threshold. 
The case $j=1$ was then fully resolved by Lu, Ma and Xie~\cite{LMX24+}, and, independently, by H{\`a}n, Lang, Marciano, Pavez-Sign{\'e}, Sanhueza-Matamala, Treglown and Z{\'a}rate-Guer{\'e}n~\cite{HLMPSTZ24+}.

\item[\textbullet] Another natural question is to consider other notions of Hamilton cycles. As mentioned in the introduction, Theorem~\ref{thm:main} implies that minimum $(k-1)$-degree $(1/2 + \varepsilon)n$ guarantees the existence of Hamilton $\ell$-cycles of high discrepancy, for every $1 \leq \ell \leq k-1$. However, unlike in the case of tight Hamilton cycles (namely, $\ell = k-1$), we do not have a matching lower bound. For example, it is known that the minimum $(k-1)$-degree threshold for the existence of loose Hamilton cycles in $k$-graphs is $\frac{1}{2(k-1)}$, see~\cite{KKMO:11,KO:06}. We wonder if this is also the discrepancy threshold of loose Hamilton cycles.  

\item[\textbullet] As mentioned in the introduction, $1/2$ is the minimum $(k-1)$-degree threshold for the existence of perfect matchings in $k$-graphs (see~\cite{KO:06matchings,RRS:09}), which shows that Corollary~\ref{cor:PM} is tight. However, it is also known~\cite{KO:06matchings,RRS:09} that a $k$-graph with minimum $(k-1)$-degree at least $(1+o(1))\frac{n}{k}$ contains a {\em near-perfect matching}, i.e.~a matching of size $\frac{n}{k} - O_k(1)$. 
For $k=3$ and $2$ colours, we have the following simple example showing that $1/2$ is the discrepancy threshold of near-perfect matchings (which is larger than the existence threshold of $1/3$). Partition the vertices into two sets $A$ and $B$ of equal size and take $G$ to be the hypergraph consisting of all edges which intersect both $A$ and~$B$. Colour in red the edges which intersect $A$ in two vertices, and colour the remaining edges in blue. Every matching of size $n/3 - t$ must have at least $n/6-2t$ edges in each colour, meaning that there is no near-perfect matching with discrepancy $\Omega(n)$. 
It is therefore natural to ask, for general $k \geq 3$ and $r \geq 2$, what is the minimum $(k-1)$-degree threshold guaranteeing a near-perfect matching of high discrepancy in every $r$-edge-colouring.

\item[\textbullet]
It would also be interesting to prove similar results for other spanning structures in hypergraphs, perhaps even of design-type, such as Steiner triple systems.
We will return to this in a future work.

\item[\textbullet]
Instead of studying minimum degree thresholds, one might also consider random hypergraphs. In the graph case, Gishboliner, Krivelevich and Michaeli~\cite{GKM_Hamilton} showed that with high probability, the random graph $G(n,p)$ has the following property: in every $r$-edge-colouring, there exists a Hamilton cycle which has at least roughly $\frac{2n}{r+1}$ edges of the same colour and hence a perfect matching with at least roughly $\frac{n}{r+1}$ edges of the same colour.  
The respective constants $\frac{2}{r+1}$ and $\frac{1}{r+1}$ are best possible even in the complete graph. This raises the question of whether the same phenomenon holds in hypergraphs. For example, as mentioned in the introduction, the result of Alon--Frankl--Lov\'asz~\cite{AFL:86} implies that every $2$-edge-colouring of $K_n^{(k)}$ has a perfect matching with at least roughly $\frac{n}{k+1}$ edges of the same colour. Is the same true in a random $k$-graph (say, with edge probability above the existence threshold $(\log n)/n^{k-1}$)? 

\item[\textbullet]
Generalizing the previous item, it would be very interesting to prove a general result relating the threshold for containing a structure (in a random graph/hypergraph) to the threshold for having high discrepancy for this structure. Namely, for a family $\mathcal{F}$ of graphs (or hypergraphs) on $[n]$, let $p_0$ be the threshold for the event that $G \sim G(n,p)$ contains a member from $\mathcal{F}$, and let $p_1$ be the threshold for the event that in every $2$-edge-colouring of $G \sim G(n,p)$, there is a member $F \in \mathcal{F}$ with high discrepancy, say of order $\Theta(e(F))$. 
Note that $p_1$ is well-defined if (and only if) $\mathcal{F}$ has high discrepancy in $K_n$. 
Clearly $p_1 \geq p_0$. Is there a general upper bound on $p_1$ in terms of $p_0$? We wonder if the recent breakthroughs around the expectation-threshold conjecture are relevant to this question.

\item[\textbullet]
Even more generally, one can ask about the discrepancy of random subhypergraphs of general hypergraphs (not necessarily those arising from graphs). 
Namely, we return to the original definition of discrepancy, where $\cH$ is a hypergraph, and we colour the vertices of $\cH$ with two colours. How ``robust'' is discrepancy? For instance, suppose $\cH$ has high discrepancy, and we take a random subset of vertices by including each vertex independently with probability~$p$. Is the random induced subhypergraph likely to still have high discrepancy?
\end{itemize}

\section*{Acknowledgements}

We thank the anonymous referees for their valuable comments.
This research was initiated while AS was visiting the University of Passau; he would like to thank the University of Passau for the hospitality and the stimulating research environment.
\bibliographystyle{amsplain_v2.0customized}
\bibliography{References}

\end{document}